\numberwithin{equation}{section}
\numberwithin{figure}{section}
\newtheorem{thm}{Theorem}[section]
\newtheorem{cor}[thm]{Corollary}
\theoremstyle{definition}
\theoremstyle{plain}
\newtheorem{prop}[thm]{Proposition}
\theoremstyle{remark}
\newtheorem{rem}[thm]{Remark}
\theoremstyle{plain}
\theoremstyle{plain}
\newtheorem{assumptionA}{Assumption}
\theoremstyle{plain}
\newtheorem{lemma}[thm]{Lemma}
\theoremstyle{plain}
\newcommand{\E}{\mathbb{E}}
\begin{document}

\title{A law of large numbers for branching Markov processes by the ergodicity of ancestral lineages}
\author{Aline Marguet\footnote{Univ. Grenoble Alpes, Inria, 38000 Grenoble, France. E-mail: aline.marguet@inria.fr}}
\maketitle
\date{}
\begin{abstract}
We are interested in the dynamic of a structured branching population where the trait of each individual moves according to a Markov process. The rate of division of each individual is a function of its trait and when a branching event occurs, the trait of a descendant at birth depends on the trait of the mother. We prove a law of large numbers for the empirical distribution of ancestral trajectories. It ensures that the empirical measure converges to the mean value of the spine which is a time-inhomogeneous Markov process describing the trait of a typical individual along its ancestral lineage. Our approach relies on ergodicity arguments for this time-inhomogeneous Markov process. We apply this technique on the example of a size-structured population with exponential growth in varying environment. 
\end{abstract}
\paragraph*{Keywords:}
Branching Markov processes, law of large numbers, time-inhomogeneous Markov process, ergodicity.
\paragraph*{A.M.S classification:}
60J80, 60F17, 60F25, 60J85, 92D25.

\section{Introduction}
We are interested in the asymptotic behavior of a continuous-time structured branching Markov process. Each individual in the population is characterized by a trait which follows a Markovian dynamic and which influences the branching events. This trait may describe the position of an individual, its size, the number of parasites inside a cell, etc. The purpose of this article is to prove a law of large numbers i.e. the convergence of the empirical measure to a deterministic limit. 

The law of large numbers has already been proved in many different cases. For the convergence in discrete time of the proportions of individuals with a certain type in the population, we refer to \cite{Athreya1998a,Athreya1998b} with respectively a discrete or continuous set of types. The generalisation of the law of large numbers to general branching Markov processes has been obtained by Asmussen and Hering in \cite{Asmussen1976} in both discrete and continuous time. Their proof relies on a specific decomposition of the first moment semigroup which applies to the case of branching diffusions. In the context of cellular aging, Guyon \cite{guyon2007limit} proved the convergence of the empirical measure for bifurcating Markov chains using the ergodicity of the spine. A generalization of those results to binary Galton-Watson processes can be found in \cite{Delmas2010}. For results in varying environment, we mention \cite{BH2015,bansaye2013}. In continuous-time, we refer to \cite{georgii2003supercritical} for asymptotic results in the case of a finite number of types, to \cite{Harris2014} for a strong law of large numbers in the case of local branching and to \cite{Ren2014} for central limit theorems. The specific case of branching diffusions, popularized by Asmussen and Hering \cite{Asmussen1976}, is adressed in \cite{EHK2010}. We also mention \cite{Englander2006,Englander2009} for the study of the case of superdiffusions. For nonlocal branching results in continuous-time, we refer to \cite{BT} for the study of the proportion of infected cells in a population, to \cite{bansaye2011limit} for the case of a general Markov branching process with a constant division rate and to \cite{cloez} for the convergence of an empirical measure in the general case. Some of those results rely on spectral theory. Here, we will follow another approach which requires no use of eigenelements as in \cite{bansaye2011limit} or \cite{guyon2007limit}. In particular, it can be applied to time-inhomogeneous dynamics. 

The question of the asymptotic behavior of structured branching processes appears in many different situations and in particular in the modeling of cell population dynamics. In this context, the law of large numbers is a key result for the construction of an estimating procedure for the parameters of the model. We refer to \cite{HO2016} for the estimation of the division rate in the case of an age-structured population.

In this article, we prove the convergence of the empirical measure for a class of general branching Markov processes, using spinal techniques. More precisely, we use the characterization of the trait along a typical ancestral lineage introduced in \cite{marguet2016}. We adapt the techniques of \cite{hairer2011yet} and we prove that under classical conditions \cite[Chapters 15, 16]{Meyn}, the semigroup of the auxiliary process, which is a time-inhomogeneous Markov process, is ergodic. Using this property, we prove a law of large numbers for the empirical distribution of ancestral trajectories. We also apply this technique to an example in varying environment where the law of large numbers result holds. 

We describe briefly the branching process $(Z_t,t\geq 0)$ and we refer to \cite{marguet2016} for its rigorous construction. We assume that individuals behave independently and that for each individual $u$ in the population:
\begin{itemize}
\item its trait $(X_t^u, t\geq 0)$ follows a Markov process on $\mathcal{X}$ with infinitesimal generator and domain $\left(\mathcal{G},\mathcal{D}(\mathcal{G})\right)$,
\item it dies at time $t$ at rate $B(t,X_t^u)$ and is replaced by $2$ individuals,
\item the trait of the two children are both distributed according to $Q(X_t^u,\cdot)$. 
\end{itemize}
\begin{rem}\label{rem:simplification}
Two remarks are in order:
\begin{enumerate}
\item For the sake of clarity, we consider only binary division but the model can easily be extended to a random number of descendants as in \cite{marguet2016}. The choice of equal marginal distribution for the traits at birth simplifies calculation but is not mandatory. 
\item The reason why we choose to make the time-dependence of the division rate explicit is twofold. First, it is the case in the example we choose to develop in the last section of this article in order to tackle environment changes. Second, it highlights the (possible) time-inhomogeneity of the measure-valued branching process $Z$. We emphasize that this case is covered by the study in \cite{marguet2016} where the trait lives on $\mathcal{X}=\mathcal{Y}\times \mathbb{R}_+$.
\end{enumerate}
\end{rem}

We focus on the empirical measure which describes the current state of the population
\begin{align*}
\frac{1}{N_t}\sum_{u\in V_t}\delta_{X_t^u},\ t\geq 0,
\end{align*}
where $V_t$ denotes the set of individuals alive at time $t$ and $N_t$ its cardinal. A crucial quantity for the study of this probability measure is the first moment semigroup applied to the constant function equal to $1$ given by
\begin{align*}
m(x,s,t):=\mathbb{E}\left[N_t\big|Z_s=\delta_x\right].
\end{align*}
It is the mean number of individuals in the population at time $t$ starting at time $s$ with a single individual with trait $x\in\mathcal{X}$. In fact, the behavior of the empirical measure is linked with the behavior of a uniformly chosen individual in the population and the mean number of individuals in the population. More precisely, we have the following result, referred to as a Many-to-One formula \cite[Theorem 3.1]{marguet2016}, which holds under Assumptions \ref{assu:debut} and \ref{assu:regularite} given below: for all non-negative measurable functions $F$ on the space of c\`adl\`ag processes, for all $0\leq s\leq t$ and $x_0\in\mathcal{X}$,
\begin{equation}\label{mtomesurable}
\E\left[\sum_{u\in V_{t}}F\left(X_{s}^{u},s\leq t\right)\big| Z_0=\delta_{x_0}\right]=m(x_0,0,t)\E\left[F\left(Y_{s}^{(t)},s\leq t\right)\big|Y_0^{(t)}=x_0\right],
\end{equation}
 where $\left(Y_{s}^{(t)}, s\leq t\right)$ is a time-inhomogeneous Markov process, called the auxiliary process, whose infinitesimal generators $\left(\mathcal{A}_{s}^{(t)}, s\leq t\right)$ are given for all suitable functions $f$ and $x\in\mathcal{X}$ by
\begin{align}\label{eq:gene_auxi}
\mathcal{A}_s^{(t)}f(x)=\frac{\mathcal{G}(m(\cdot,s,t)f)(x)-f(x)\mathcal{G}(m(\cdot,s,t))(x)}{m(x,s,t)}+2B(s,x)\int_{\mathcal{X}}(f(y)-f(x))\frac{m(y,s,t)}{m(x,s,t)}Q(x,dy).
\end{align}
The auxiliary process corresponds to the trait of a typical individual in the population \cite{marguet2016}. More precisely, the family of operators $(P_{r,s}^{(t)},0\leq r\leq s \leq t)$ defined for all measurable functions $f$ by
\begin{align*}
P_{r,s}^{(t)}f(x)=\frac{R_{r,s}(fm(\cdot,s,t))(x)}{m(x,r,t)},
\end{align*}
where $R_{r,s}f(x)=\mathbb{E}\left[\sum_{u\in V_s} f(X_s^u)|Z_r = \delta_x\right]$ forms a time-inhomogeneous semigroup (i.e. $P_{r,u}^{(t)}P_{u,s}^{(t)}=P_{r,s}^{(t)}$ for all $r\leq u\leq s\leq t$), which is the semigroup of the auxiliary process.
It can also be exhibited using a change of probability measure. Indeed, by Feynman-Kac's formula (see \cite[Section 1.3]{del2004feynman}), we have
\begin{align*}
P_{r,s}^{(t)}f(x)=m(x,r,t)^{-1}\mathbb{E}\left[e^{\int_r^sB(X_v)dv}m(X_s,s,t)f(X_s)\big|X_r=x\right],
\end{align*}
where $(X_s,r\leq s\leq t)$ is a Markov process with infinitesimal generator $\mathcal{M}$ given by
\begin{align*}
\mathcal{M}f(x)=\mathcal{G}f(x)+2B(x)\int_{\mathcal{X}}\left(f(y)-f(x)\right)Q(x,dy).
\end{align*}
Then, the change of probability measure given by the $\sigma(X_l,l\leq s)$-martingale
\begin{align*}
M_s^{(t)}:=\frac{e^{\int_r^sB(X_s)ds}m(X_s,s,t)}{m(x,r,t)},\quad \text{for }r\leq s\leq t
\end{align*}
exhibits the probability measure corresponding to the auxiliary process.

The auxiliary process and its asymptotic behavior are the keys to obtain the main result of this article which is the following law of large numbers for the empirical distribution of ancestral trajectories: 

\begin{align*}
\left(\frac{\sum_{u\in V_{t+T}}F\left(X_{t+s}^u, s\leq T\right)}{N_{t+T}}-\mathbb{E}\left[F\left(Y_{t+s}^{(t+T)},s\leq T\right)\middle| Y_0^{(t+T)} = x_1\right]\right)\xrightarrow[t\rightarrow +\infty]{} 0\text{, in }\mathbb{L}_2(\delta_{x_0}),
\end{align*}
for all $x_0,x_1\in\mathcal{X}$ and $T>0$, where the $\mathbb{L}_2(\delta_{x_0})$-convergence is the $\mathbb{L}_2$-convergence with initial measure $\delta_{x_0}$.

This result ensures that the behavior of the whole population becomes deterministic asymptotically and that this behavior is given by the limit behavior of the auxiliary process. This weak law of large numbers gives information on the ancestral lineages in the population. To establish this result, we prove in particular that under the classical drift and minorization conditions \cite[Chapters 15, 16]{Meyn} adapted to the time-inhomogeneous case, the auxiliary process is ergodic in the sense that there exists $\overline{c}>0$ such that for all $x,y\in\mathcal{X}$, $T>0$, for all bounded measurable functions $F:\mathbb{D}([0,T],\mathcal{X})\rightarrow \mathbb{R}$ and all $0\leq r\leq t$, we have
\begin{align*}
\left|P_{r,t,T}F(x)-P_{r,t,T}F(y)\right|\leq C e^{-\overline{c}(t-r)}d(x,y)\left\Vert F\right\Vert_{\infty},
\end{align*}
where $d$ is a distance on $\mathcal{X}$, $C$ is a positive constant and
\begin{align}\label{eq:semigroup_traj}
P_{r,t,T}F(x):=\mathbb{E}\left[F\left(Y_{t+s}^{(t+T)}, s\leq T\right)\big| Y_r^{(t+T)}=x\right].
\end{align}

We also apply our method to study a size-structured population with a division rate that depends both on the trait and the time. This example models the dynamic of size-structured cell population. Hence, the trait of interest is the size of each individual, increasing exponentially at rate $a$. We assume that each cell divides at rate $B(t,x)=x\varphi(t)$, where $\varphi$ is a positive function which describes environment changes. At division, a cell of size $x$ splits into two daughter cells of size $\theta x$ and ($1-\theta)x$, where $\theta$ is uniformly distributed on $[\varepsilon,1-\varepsilon]$ for some $\varepsilon>0$. In this case, the infinitesimal generator of the auxiliary process is given by
\begin{align*}
\mathcal{A}_s^{(t)}f(x)=axf'(x)+2x\varphi(s)\int_{\varepsilon x}^{(1-\varepsilon)x}\left(f(y)-f(x)\right)\frac{m(y,s,t)}{m(x,s,t)}\frac{dy}{(1-2\varepsilon)x},
\end{align*}
for all $f:\mathbb{R}_+\rightarrow\mathbb{R}$ continuously differentiable, $s,t\in\mathbb{R}_+$ such that $s<t$ and $x\in \mathbb{R}_+$.

Spectral techniques fall apart in this case because of the time dependence of the division rate whereas our method works. We prove the law of large for the distribution of ancestral trajectories in this special case. In particular, we exhibit a Lyapunov function, i.e. a function $V$ satisfying the first condition of Assumption \ref{assu:ergo} below, for the time-inhomogeneous auxiliary process associated with this population dynamic and we establish the minorization condition \ref{assu:ergo}.2 detailed in Section \ref{sec:asymp}.
\paragraph*{Outline}
In Section \ref{sec:def}, we detail the structured branching process and the assumptions considered for its existence and uniqueness. Then, in Section \ref{sec:asymp}, we study the asymptotic behavior of the empirical measure: first, in Section \ref{sub:ergo}, we give our result on the ergodicity of the auxiliary process, then, in Section \ref{sub:lln}, we state the law of large numbers for the empirical distribution of ancestral trajectories for the structured branching process. Section \ref{sub:proofs} is dedicated to proofs. Finally, in Section \ref{sec:ex_inh}, we apply the techniques developed in the previous sections to study the asymptotic behavior of a size-structured population in a fluctuating environment. 
\paragraph*{Notation.}We use the classical Ulam-Harris-Neveu notation to identify each individual. Let
\[\mathcal{U}=\bigcup_{n\in\mathbb{N}}\left\lbrace 0,1\right\rbrace^{n}.
\]
The first individual is labeled by $\emptyset $. When an individual $u\in\mathcal{U}$ dies, its descendants are labeled by $u0,u1$. If $u$ is an ancestor of $v$, we write $u\leq v$.  With a slight abuse of notation, for all $u\in V_t$ and $s<t$, we denote by $X_s^u$ the trait of the unique ancestor living at time $s$ of $u$.

We also introduce the following notation for the time-inhomogeneous auxiliary process: for all measurable functions $f$, we set
\begin{align*}
\mathbb{E}_x\left(f\left(Y_s^{(t)}\right)\right):=\mathbb{E}\left(f\left(Y_s^{(t)}\right)\big|Y_0^{(t)}=x\right),
\end{align*}
for all $x\in\mathcal{X}$, $0\leq s\leq t$. 

Finally, we recall that for all $t\geq 0$ and all $0\leq r \leq s\leq t$, $P_{r,s}^{(t)}$ is also a linear operator from the set of measures of finite mass into itself through the left action. In particular, for any $x\in \mathcal{X}$, we will denote the measure $\delta_x P_{r,s}^{(t)}(dy)$ by $P_{r,s}^{(t)}(x,dy)$.

\section{The structured branching process\label{existence}}\label{sec:def}
First, we introduce some useful notations and objects to characterize the branching process. Henceforth, we work on a probability space denoted by $\left(\Omega,\mathcal{F},\mathbb{P}\right)$.

\paragraph*{Dynamic of the trait.} Let  $\mathcal{X}\subset\mathbb{R}^d$ be a measurable complete space for some $d\geq 1$. It is the state space of the Markov process describing the trait of the individuals. Let $\mathcal{G}:\mathcal{D}(\mathcal{G})\subset\mathcal{C}_b(\mathcal{X})\rightarrow \mathcal{C}_b(\mathcal{X})$ be the infinitesimal generator associated with a strongly continuous contraction semigroup where $\mathcal{C}_b(\mathcal{X})$ denotes the continuous bounded functions on $\mathcal{X}$. Then, $(X_t,t\geq 0)$ is the unique $\mathcal{X}$-valued c\`adl\`ag strong Markov process solution of the martingale problem associated with $\left(\mathcal{G},\mathcal{D}(\mathcal{G})\right)$ \cite[Theorems 4.4.1 and 4.4.2]{ethier2009markov}. We denote by $(X_t^x,t\geq 0)$ the corresponding process starting from $x\in\mathcal{X}$.

\paragraph*{Division events.} An individual with trait $x$ at time $t$ dies at an instantaneous rate $B(t,x)$, where $B$ is a continuous function from $\mathbb{R}_+\times \mathcal{X}$ to $\mathbb{R}_+$. It is replaced by two children. Their traits at birth are distributed according to the probability measure $Q(x,\cdot)$ on $\mathcal{X}^2$. We suppose that the probability measures corresponding to the marginal distributions are equal. By a slight abuse of notation, we will also denote them by $Q$.

We refer the reader to Remark \ref{rem:simplification} in the introduction for comments on the choice of model. In order to ensure the non-explosion in finite time of such a process, we need to consider the following hypotheses.
\begin{assumptionA}\label{assu:debut}
We suppose that
\begin{enumerate}
\item there exist $b_1,b_2:\mathbb{R}_+\rightarrow \mathbb{R}_+^*$ continuous and $\gamma\geq 1$ such that for all $(t,x)\in\mathbb{R}_+\times\mathcal{X}$,
\[B(t,x)\leq b_1(t)\left| x\right|^{\gamma}+b_2(t),\]
\item for all $x\in\mathcal{X}$,
\[
Y_1(x)+Y_2(x)\leq x,
\]
where the law of the couple of random variables $(Y_1(x),Y_2(x))$ is given by $Q(x,dy_1,dy_2)$,
\item for all $x\in\mathcal{X}$,
\begin{align*}
\lim_{t\rightarrow+\infty}\int_0^t B(s,X^x_s)ds=+\infty,\text{ almost surely,}
\end{align*}
\item there exists a sequence of functions $(h_{n,\gamma})_{n\in\mathbb{N}}$ such that for all $n\in\mathbb{N},\ h_{n,\gamma}\in \mathcal{D}(\mathcal{G})$ and $\lim_{n\rightarrow+\infty} h_{n,\gamma}(x)=|x|^{\gamma}$ for all $x\in\mathcal{X}$ and there exist $c_1,c_2\geq 0$ such that for all $x\in\mathcal{X}$:
\begin{align*}
\lim_{n\rightarrow+\infty}\mathcal{G}h_{n,\gamma}(x)\leq c_1|x|^{\gamma}+c_2,
\end{align*}
where $\gamma$ is defined in the first item and for $x\in\mathbb{R}^d,|x|^{\gamma}=\left(\sum_{i=1}^d |x_i|\right)^{\gamma}$.
\end{enumerate}
\end{assumptionA}
\begin{rem}
We have slightly modified the first condition on the division rate compared to the one in \cite{marguet2016} to better fit the framework of this paper. The adaptation of the proof of the non-explosion of the population to use this modified assumption is straightforward.
\end{rem}
Under Assumption \ref{assu:debut}, we have the strong existence and uniqueness of the structured branching process $Z$ in the state of c\`adl\`ag measure-valued processes, where for all $t\geq 0$,
 \[
Z_t=\sum_{u\in V_t} \delta_{X_t^u},\ t\geq 0.
\]
We refer to Theorem 2.3. in \cite{marguet2016} for more details and to \cite{fournier2004microscopic} for the study of c\`adl\`ag measure-valued processes. 

For the existence of the auxiliary process $Y^{(t)}$ with infinitesimal generators given by \eqref{eq:gene_auxi}, we need to consider additional assumptions on the mean number of individuals in the population at a given time. 
Let us define the domain of the infinitesimal generator of the auxiliary process by
\begin{align*}
\mathcal{D}(\mathcal{A})=\left\lbrace f\in\mathcal{D}(\mathcal{G})\text{ s.t. } m(\cdot,s,t)f(s,x)\in\mathcal{D}(\mathcal{G})\ \text{for all } t\geq 0\text{ and }s \leq t\right\rbrace.
\end{align*}

\begin{assumptionA}\label{assu:regularite}
We suppose that for all $t\geq 0$:
\begin{itemize}
\item[-] for all $x\in\mathcal{X}$, $s\mapsto m(x,s,t)$ is continuously differentiable on $[0,t]$,
\item[-] for all $x\in\mathcal{X}$, $f\in\mathcal{D}(\mathcal{A})$, $s\mapsto \mathcal{G}(m(\cdot,s,t)f)(x)$ is continuous.
\item[-] $\mathcal{D}(\mathcal{A})$ is dense in $\mathcal{C}_b(\mathcal{X})$ for the topology of uniform convergence.
\end{itemize}
\end{assumptionA}
This assumption allows us to derive the expression of the generator of the auxiliary process \cite[Lemma 3.4 ]{marguet2016}). It is in particular satisfied in the example developed in Section \ref{sec:ex_inh} and in the examples of \cite{marguet2016}. 
\begin{assumptionA}\label{assu:doeblin}
For all $t\geq 0$,
\[
\sup_{x\in\mathcal{X}}\sup_{s\leq t}\int_\mathcal{X}\frac{m(y,s,t)}{m(x,s,t)}Q(x,dy)<+\infty,
\]
\end{assumptionA}
This assumption tells us that we control uniformly in $x$ the benefit or the penalty of a division. In the general case, the control of the ratio $m(y,s,t)(m(x,s,t))^{-1}$ seems difficult to obtain. We refer to \cite{marguet2016} or to Section \ref{sec:ex_inh} for examples where this assumption is satisfied.

\section{Asymptotic behaviour of the structured branching process}\label{sec:asymp}
The purpose of this section is to prove the law of large numbers result. We show that asymptotically, the behavior of the whole population corresponds to the mean behavior of the auxiliary process introduced in \cite{marguet2016}.  The ergodicity of this process is the key for the proof of the law of large numbers. We notice that the ergodicity of the auxiliary process is also required for the proof of the convergence of the empirical measure in \cite{guyon2007limit}, \cite{bansaye2011limit} and \cite{cloez}. 

In Subsection \ref{sub:ergo}, we prove the ergodicity of the auxiliary process. Then, in Subsection \ref{sub:lln}, we state the main theorem of this article which is the convergence in $\mathbb{L}_2$-norm of the difference between the empirical measure and the mean value of the auxiliary process towards zero as time goes to infinity. Subsection \ref{sub:proofs} is devoted to proofs.

\subsection{Ergodicity of the auxiliary process}\label{sub:ergo}
For all $t\geq 0$, we recall that $\left(P_{r,s}^{(t)}, r\leq s\leq t\right)$ denotes the semigroup of the auxiliary process defined in \eqref{eq:gene_auxi} by its infinitesimal generators.


The next assumption gathers two classical hypotheses to obtain the ergodicity of a process \cite[Chapters 15, 16]{Meyn}. We adapt them to the time-inhomogeneous case.
\begin{assumptionA}\label{assu:ergo}
We suppose that:
\begin{enumerate}
\item there exists a function $V:\mathcal{X}\rightarrow \mathbb{R}_+$ and $c,d>0$ such that for all $x\in\mathcal{X}$, $t\geq 0$ and $s\leq t$,
\[
\mathcal{A}_s^{(t)}V(x)\leq -cV(x)+d,
\] 
\item for all $0<r<s$, there exists $\alpha_{s-r}\in (0,1)$ and a probability measure $\nu_{r,s}$ on $\mathcal{X}$ such that for all $t\geq s$,
\[
\inf_{x\in B(R,V)} P_{r,s}^{(t)}(x,\cdot)\geq \alpha_{s-r}\nu_{r,s}(\cdot),
\] 
with $B(R,V)=\left\lbrace x\in\mathcal{X}:V(x)\leq R\right\rbrace$ for some $R>\frac{2d}{c}$ where $c,d$ are defined in the first point.
\end{enumerate}	 
\end{assumptionA}  
In what follows, as in \cite{Meyn,hairer2011yet} we call {\emph{Lyapunov function}} any function $V$ satisfying the first condition of Assumption \ref{assu:ergo} and we will refer to the second point of Assumption \ref{assu:ergo} as a \emph{minorization condition}. 
Adapting directly Theorem 3.1 of \cite{hairer2011yet}, we prove that the semigroup of the auxiliary process is a contraction operator for a well-chosen norm. For all $\beta>0$, we define the following metric on $\mathcal{X}$:
\begin{align*}
d_{\beta}(x,y)=\begin{cases} 0 & x=y,\\
2+\beta V(x)+\beta V(y) & x\neq y.
\end{cases}
\end{align*}
We can now state the result on the ergodic behavior of the trajectories of auxiliary process.
\begin{prop}\label{prop:ergo}
Let $T>0$. Under Assumptions \ref{assu:debut},\ref{assu:regularite},\ref{assu:doeblin},\ref{assu:ergo}, there exists $\overline{c}>0$ and $\beta>0$ such that for all $x,y\in\mathcal{X}$, for all bounded measurable functions $F:\mathbb{D}([0,T],\mathcal{X})\rightarrow \mathbb{R}$ and all $0\leq r\leq t$, we have
\begin{align}\label{eq:ergo_traj}
\left|P_{r,t,T}F(x)-P_{r,t,T}F(y)\right|\leq Ce^{-\overline{c}(t-r)}\left\Vert F\right\Vert_{\infty}d_{\beta}(x,y).
\end{align}
where $C>0$ is a positive constant. 
\end{prop}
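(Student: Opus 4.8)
The plan is to split the estimate \eqref{eq:ergo_traj} into two parts: first reduce the trajectory-level bound to a contraction estimate for the one-dimensional marginal semigroup $\left(P_{r,s}^{(t)}\right)$ of the auxiliary process, and then establish that marginal contraction by adapting the Harris-type argument of \cite{hairer2011yet} to the present time-inhomogeneous setting.

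For the reduction, fix $T>0$ and $0\leq r\leq t$ and set $G(z):=\mathbb{E}\left[F\left(Y_{t+s}^{(t+T)},s\leq T\right)\middle|Y_t^{(t+T)}=z\right]$. Since $F$ depends only on the path on $[t,t+T]$ and $r\leq t$, the Markov property gives $P_{r,t,T}F(x)=P_{r,t}^{(t+T)}G(x)$ with $\|G\|_\infty\leq\|F\|_\infty$, so it suffices to control $\left|P_{r,t}^{(t+T)}G(x)-P_{r,t}^{(t+T)}G(y)\right|$ for a merely \emph{bounded} $G$. The key observation is that, because $d_\beta(x,y)\geq 2$ whenever $x\neq y$, any bounded $g$ satisfies $|g(x)-g(y)|\leq 2\|g\|_\infty\leq\|g\|_\infty\,d_\beta(x,y)$, so its Lipschitz seminorm with respect to $d_\beta$ is at most $\|g\|_\infty$. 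Hence \eqref{eq:ergo_traj} will follow from a Wasserstein contraction $W_{d_\beta}$ for the marginal semigroup,
\[
W_{d_\beta}\!\left(P_{r,s}^{(t)}(x,\cdot),P_{r,s}^{(t)}(y,\cdot)\right)\leq Ce^{-\overline{c}(s-r)}d_\beta(x,y),\qquad 0\leq r\leq s\leq t,
\]
uniformly in the horizon $t$, applied with $s=t$, horizon $t+T$, and the dual formulation of $W_{d_\beta}$ tested against $G$.

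To obtain this marginal contraction I would follow the proof of Theorem 3.1 of \cite{hairer2011yet}, checking at each step that the constants are uniform in the time variables. Integrating the drift condition \ref{assu:ergo}.1 along the semigroup gives, by Gr\"onwall, $P_{r,s}^{(t)}V(x)\leq e^{-c(s-r)}V(x)+\tfrac{d}{c}\left(1-e^{-c(s-r)}\right)$ for all $r\leq s\leq t$; over a fixed interval of length $h$ this is a geometric drift $P_{r,r+h}^{(t)}V\leq\gamma V+K$ with $\gamma=e^{-ch}<1$ and $K=\tfrac{d}{c}(1-e^{-ch})$, for which $\tfrac{2K}{1-\gamma}=\tfrac{2d}{c}$, so the condition $R>\tfrac{2d}{c}$ in Assumption \ref{assu:ergo} is exactly the one required. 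Combining this drift with the minorization \ref{assu:ergo}.2 on $B(R,V)$ through the standard coupling — which distinguishes the case where both points lie in $B(R,V)$, where the minorization forces coincidence with probability at least $\alpha_h$, from the case where one lies outside, where the drift makes $V$ decrease — would yield, for a suitable $\beta>0$, a one-step contraction $W_{d_\beta}\!\left(P_{r,r+h}^{(t)}(x,\cdot),P_{r,r+h}^{(t)}(y,\cdot)\right)\leq\overline{\alpha}\,d_\beta(x,y)$ with $\overline{\alpha}\in(0,1)$. Writing $P_{r,s}^{(t)}$ as a composition of $\lfloor(s-r)/h\rfloor$ such time-$h$ transitions and using that $W_{d_\beta}$-contractions compose then delivers the exponential decay with $\overline{c}=-h^{-1}\log\overline{\alpha}$.

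The main obstacle is precisely the time-inhomogeneity: in \cite{hairer2011yet} the semigroup is homogeneous and one iterates a single operator, whereas here $P_{r,s}^{(t)}=P_{r,r+h}^{(t)}P_{r+h,r+2h}^{(t)}\cdots$ is a product of distinct operators and, moreover, the horizon $t$ varies. The composition argument produces a genuine exponential rate only if both $\overline{\alpha}$ and $\beta$ can be chosen independently of $(r,t)$, and this is exactly what the uniform form of Assumption \ref{assu:ergo} is designed to secure — the drift holds for all $s\leq t$ with the same $c,d$, and the minorization holds for every horizon $t\geq s$ with a coefficient $\alpha_{s-r}$ depending only on the interval length. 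The delicate point will be to verify that these uniformities survive the coupling construction, so that $\overline{\alpha},\beta,\overline{c}$ are truly free of $(r,t)$; once that is established, both the composition over intervals and the passage from marginals to trajectories are routine.
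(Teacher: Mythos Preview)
Your proposal is correct and follows essentially the same route as the paper: reduce the trajectory estimate to the marginal semigroup via the Markov property (the paper's \eqref{eq:1}), then adapt the Hairer--Mattingly contraction argument to get a one-step contraction in the $d_\beta$-Lipschitz seminorm with constants depending only on the step length $\Delta$, and iterate. The paper carries this out with the same case split (drift when $V(x)+V(y)\geq R$, minorization otherwise) and the same uniformity in $(r,t)$ that you correctly identify as the crux; the only cosmetic difference is that the paper optimises over $\Delta$ at the end to define $\overline{c}$.
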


In the case of a division rate independent of time, the auxiliary process is still time-inhomogeneous but we obtain the convergence of the trajectories of the auxiliary process.
\begin{prop}\label{prop:conv_homogene}
Let $T\geq 0$. Assume that $B(t,x)\equiv B(x)$ for all $t\geq 0$ and $x\in\mathcal{X}$. Then, under Assumptions \ref{assu:debut},\ref{assu:regularite},\ref{assu:doeblin},\ref{assu:ergo}, there exists a probability measure $\Pi$ on the Borel $\sigma$-field of $\mathbb{D}\left([0,T],\mathcal{X}\right)$ endowed with the Skorokhod distance such that for all bounded measurable functions $F:\mathbb{D}\left([0,T],\mathcal{X}\right)\rightarrow \mathbb{R}$ and for all $x\in\mathcal{X}$,
\begin{align*}
\left|P_{0,t,T}F(x)- \Pi(F)\right|\leq Ce^{-\overline{c}t}\left\Vert F\right\Vert_{\infty}\left(2+2\beta V(x)+\beta\frac{d}{c}\right). 
\end{align*}
\end{prop}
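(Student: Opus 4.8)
The plan is to deduce Proposition \ref{prop:conv_homogene} from Proposition \ref{prop:ergo} by exploiting the extra structure available when $B$ does not depend on time. The key observation is that when $B(t,x)\equiv B(x)$, the whole branching mechanism is time-homogeneous, so the first moment semigroup satisfies $m(x,s,t)=m(x,0,t-s)$ and more generally the operators $R_{r,s}$ depend only on $s-r$. Consequently the trajectory semigroups acquire a shift-invariance: $P_{r,t,T}F(x)$ depends on $r$ and $t$ only through $t-r$, and one expects a genuine limiting object as $t-r\to\infty$. The strategy is therefore to show that the family $\left(P_{0,t,T}F(x)\right)_{t\geq 0}$ is Cauchy as $t\to\infty$, identify its limit as a probability measure $\Pi$ on the Skorokhod space $\mathbb{D}([0,T],\mathcal{X})$, and control the rate using the contraction estimate \eqref{eq:ergo_traj}.

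First I would make the shift-invariance precise. In the homogeneous case one checks, using the Feynman-Kac representation and the semigroup property $P_{r,u}^{(t)}P_{u,s}^{(t)}=P_{r,s}^{(t)}$, that for any $h\geq 0$ one has $P_{r+h,t+h,T}F=P_{r,t,T}F$, i.e. only the elapsed time $t-r$ matters. I would then compare $P_{0,t,T}F(x)$ with $P_{0,t+h,T}F(x)$ for $h>0$ by inserting the intermediate time $h$ and using the Markov property of the auxiliary semigroup: writing $P_{0,t+h,T}F(x)=\mathbb{E}_x\!\left[\left(P_{h,t+h,T}F\right)\!\left(Y_h\right)\right]$ against the law of the auxiliary process started at $x$, and using shift-invariance $P_{h,t+h,T}F=P_{0,t,T}F$, the difference becomes
\begin{align*}
\left|P_{0,t+h,T}F(x)-P_{0,t,T}F(x)\right|=\left|\mathbb{E}_x\!\left[\left(P_{0,t,T}F\right)\!\left(Y_h\right)\right]-\left(P_{0,t,T}F\right)(x)\right|.
\end{align*}
Each term $\left(P_{0,t,T}F\right)(Y_h)-\left(P_{0,t,T}F\right)(x)$ is exactly a difference of the type controlled by Proposition \ref{prop:ergo} with elapsed time $t$, so it is bounded by $Ce^{-\overline{c}t}\|F\|_\infty d_\beta(Y_h,x)$. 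Taking expectations and using the Lyapunov bound of Assumption \ref{assu:ergo}.1 to control $\mathbb{E}_x[V(Y_h)]$ yields a Cauchy estimate that is uniform in $h$ and decays like $e^{-\overline{c}t}$, which both proves convergence and gives the stated rate.

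Next I would identify the limit as a bona fide probability measure. The pointwise limits $\Lambda(F):=\lim_{t\to\infty}P_{0,t,T}F(x)$ define a positive linear functional on bounded measurable functions $F$ on $\mathbb{D}([0,T],\mathcal{X})$ with $\Lambda(1)=1$; I would argue, either through the Riesz representation theorem on the Polish space $\mathbb{D}([0,T],\mathcal{X})$ or via a tightness argument inherited from the Lyapunov control, that $\Lambda(F)=\Pi(F)$ for a probability measure $\Pi$, and crucially that the limit does not depend on the starting point $x$ — this independence is forced by \eqref{eq:ergo_traj}, since any two starting points give trajectory expectations differing by $Ce^{-\overline{c}t}\|F\|_\infty d_\beta(x,y)\to 0$. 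To obtain the explicit constant $2+2\beta V(x)+\beta \tfrac{d}{c}$, I would sum the telescoping Cauchy estimates over dyadic (or integer) increments of $t$ and bound the resulting series of $\mathbb{E}_x[V(Y_h)]$ terms. Here Grönwall applied to $\tfrac{d}{dh}\mathbb{E}_x[V(Y_h)]\leq -c\,\mathbb{E}_x[V(Y_h)]+d$ gives $\mathbb{E}_x[V(Y_h)]\leq e^{-ch}V(x)+\tfrac{d}{c}(1-e^{-ch})\leq V(x)+\tfrac{d}{c}$, and feeding this into $d_\beta(Y_h,x)=2+\beta V(Y_h)+\beta V(x)$ produces precisely the claimed bound after taking the supremum over $h$.

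The main obstacle I anticipate is the passage from the contraction inequality \eqref{eq:ergo_traj}, which only compares two starting points, to convergence toward a fixed limiting measure, together with the measure-theoretic step of showing the limit functional is genuinely a probability measure on the (non-locally-compact) Skorokhod space rather than merely a finitely additive set function. The self-comparison trick above circumvents the need for an a priori candidate limit by turning convergence into a Cauchy property, which is the cleanest route; the remaining care is in verifying that the Lyapunov estimate on $V(Y_h)$ is uniform enough to make the telescoping series summable and to deliver the exact constant. I would expect the ergodicity of Proposition \ref{prop:ergo} together with Assumption \ref{assu:ergo}.1 to supply everything needed, so that no new hard analysis beyond a standard Grönwall bound and a Riesz-type representation is required.
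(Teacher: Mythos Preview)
Your proposal is correct and follows essentially the same route as the paper: use time-homogeneity to obtain the shift-invariance $P_{r,t+r,T}F=P_{0,t,T}F$, condition at an intermediate time via the Markov property to write $P_{0,t+h,T}F(x)$ as an average of $P_{0,t,T}F(y)$, apply the contraction \eqref{eq:ergo_traj}, and bound the integrated $d_\beta$ with \eqref{eq:assu1avecsemi} to get a Cauchy estimate carrying exactly the constant $2+2\beta V(x)+\beta d/c$. Two minor cleanups: no telescoping is needed---sending $h\to\infty$ directly in the Cauchy bound already yields the stated inequality---and you should make explicit that the ``$Y_h$'' you integrate against is $Y_h^{(t+h+T)}$, since the auxiliary semigroup depends on the terminal time.
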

This convergence is different from classical ergodicity results because $(P_{0,t}^{(t)},t\geq 0)$ is not a semigroup.

\subsection{A law of large numbers}\label{sub:lln}
Before stating the law of large numbers, we need to consider a final set of assumptions. For $x,y\in \mathcal{X}$ and $s>0$, let
\begin{equation}\label{eq:defphi}
\varphi_{s}(x,y)=\sup_{t\geq s} \frac{m(x,0,s)m(y,s,t)}{m(x,0,t)}.
\end{equation}
It quantifies the benefit, in term of number of individuals at time $t$, of "changing" the trait of the entire population at time $s$ by the trait $y$. This quantity is possibly infinite, but Assumption \ref{assu:J2phiNt} below ensures that it is finite. For all $x\in\mathcal{X}$, we define:
\begin{equation}\label{eq:c(x)}
c(x)=\underset{t\rightarrow\infty}\liminf\frac{\log(m(x,0,t))}{t},
\end{equation}
which corresponds to the growth rate of the total population. In particular, if the division rate is constant $B\equiv b$, we have that $c(x)\equiv b$ (see (2.6) and below in \cite{marguet2016}).

Using the same notations as \cite{marguet2016}, we set for all measurable functions $f:\mathcal{X}\rightarrow\mathbb{R}$ and for all $x\in\mathcal{X}$,
\begin{align}\label{eq:J2}
Jf(x)=2\int_{\mathcal{X}\times \mathcal{X}}f\left(y_0\right)f\left(y_1\right)Q(x,dy_0,dy_1).
\end{align}
It represents the average trait at birth of the descendants of an individual.

\begin{assumptionA}\label{assu:J2phiNt}
We suppose that
\begin{enumerate}
\item for all $x\in\mathcal{X}$, $c(x)>0$,
\item there exist $\alpha_1,D_1\geq 0$ such that $\alpha_1<c(x)$ for all $x\in\mathcal{X}$ and for all $t>0$,
\begin{equation*}
\E_{x}\left[B\left(t,Y_{t}^{(t)}\right)J\left((1\vee V(\cdot))\varphi_{t}\left(x,\cdot\right)\right)\left(Y_{t}^{(t)}\right)\right]\leq D_1e^{\alpha_1 t},
\end{equation*}
where $V$ is defined in Assumption \ref{assu:ergo}.
\end{enumerate}
\end{assumptionA} 
By the definition of $c(x)$, the first point ensures that the growth of the population is exponential (which is not the case, for example, if the trait of the initial individual remains constant at a value where $B$ is equal to zero). This condition is satisfied for instance if the division rate is lower bounded by a positive constant or in the example given in the last section. The second point is a technical assumption. In particular, if $\varphi_t,B,V$ are upper bounded by polynomials and if we can control the moments of the measure $m$, the first point of Assumption \ref{assu:J2phiNt} amounts to bounding the moments of the auxiliary process. We refer the reader to Lemma \ref{lemma:verif_J2Nt} in the last section of this article for the verification of this hypothesis in an example.

We first state a slightly less strong result than the law of large numbers. 
\begin{thm}\label{thm:weakl2}
Let $T>0$. Under Assumptions \ref{assu:debut},\ref{assu:regularite},\ref{assu:doeblin},\ref{assu:ergo},\ref{assu:J2phiNt}, we have for all bounded measurable functions $F:\mathbb{D}([0,T],\mathcal{X})\rightarrow\mathbb{R}$, for all   $x_0,x_1\in\mathcal{X}$,
\begin{align}\label{eq:weakl2}
\mathbb{E}_{\delta_{x_0}}\left[\left(\sum_{u\in V_{t+T}}\frac{F\left(X^u_{t+s},s\leq T\right)-P_{0,t,T}F\left(x_1\right)}{m(x_0,0,t+T)}\right)^2\right]\underset{t\rightarrow \infty}{\longrightarrow}0.
\end{align}
Moreover, the rate of convergence is lower-bounded by:
\begin{align*}
v(t)=\exp\left(\min\left(\overline{c},\frac{c(x_0)-\alpha_1}{2}\right)t\right),
\end{align*}
where $\overline{c}$ is defined below in \eqref{eq:cbar}.
\end{thm}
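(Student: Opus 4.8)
The plan is to expand the square in \eqref{eq:weakl2} and to bound the resulting diagonal and off-diagonal sums by the first- and second-moment (Many-to-One / Many-to-Two) formulas, extracting decay from the ergodicity of Proposition \ref{prop:ergo} and from Assumption \ref{assu:J2phiNt}. Throughout set $\ell:=P_{0,t,T}F(x_1)$ and, for $u\in V_{t+T}$, $\Phi_u:=F(X^u_{t+s},s\leq T)-\ell$, so the left-hand side of \eqref{eq:weakl2} equals $m(x_0,0,t+T)^{-2}\,\mathbb{E}_{\delta_{x_0}}[(\sum_{u\in V_{t+T}}\Phi_u)^2]$, and
\[
\mathbb{E}_{\delta_{x_0}}\Big[\big(\sum_{u}\Phi_u\big)^2\Big]=\mathbb{E}_{\delta_{x_0}}\Big[\sum_{u}\Phi_u^2\Big]+\mathbb{E}_{\delta_{x_0}}\Big[\sum_{u\neq v}\Phi_u\Phi_v\Big].
\]
Since $F$ is bounded, $|\Phi_u|\leq 2\|F\|_\infty$, so the diagonal sum is at most $4\|F\|_\infty^2 N_{t+T}$; by \eqref{mtomesurable} its expectation is $4\|F\|_\infty^2 m(x_0,0,t+T)$, and after division by $m(x_0,0,t+T)^2$ it vanishes at rate $e^{-c(x_0)t}$ by Assumption \ref{assu:J2phiNt}.1 and \eqref{eq:c(x)}. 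The same crude bound handles the contribution to the off-diagonal sum of pairs $u\neq v$ whose most recent common ancestor divides in $(t,t+T)$: conditioning on $\mathcal F_t$, this is at most $4\|F\|_\infty^2\,\mathbb{E}_{\delta_{x_0}}[\sum_{w\in V_t}N_w^2]$, where $N_w$ counts the descendants of $w\in V_t$ alive at $t+T$; this is a bounded-horizon second moment which, normalized by $m(x_0,0,t+T)^2$, is again of order $e^{-c(x_0)t}$ under the moment controls of Assumption \ref{assu:J2phiNt}. All the structure lies in the remaining pairs, those separating before time $t$.

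For these I would use the branching decomposition over the division time $\sigma\in(0,t)$ of their most recent common ancestor. For $\sigma\leq t$ the whole observation window $[t,t+T]$ lies after $\sigma$, so, given the traits $y_0,y_1$ of the two children produced at $\sigma$, the two subtrees evolve independently; applying \eqref{mtomesurable} to each subtree (rooted at $\sigma$) shows that each contributes $m(y_i,\sigma,t+T)\,(P_{\sigma,t,T}F(y_i)-\ell)$, the constant $\ell$ being reproduced through $\mathbb{E}[\sum_{u}\ell]=\ell\,m(y_i,\sigma,t+T)$. Applying \eqref{mtomesurable} once more to the outer sum over $V_\sigma$ yields, up to a combinatorial factor,
\begin{align*}
\mathbb{E}_{\delta_{x_0}}\Big[\sum_{\substack{u\neq v\\ \mathrm{MRCA}\leq t}}\Phi_u\Phi_v\Big]=2\int_0^{t} m(x_0,0,\sigma)\,\mathbb{E}_{x_0}\Big[&B(\sigma,Y^{(\sigma)}_\sigma)\\
&\times\int_{\mathcal{X}^2}\prod_{i=0,1}m(y_i,\sigma,t+T)\big(P_{\sigma,t,T}F(y_i)-\ell\big)\,Q(Y^{(\sigma)}_\sigma,dy_0,dy_1)\Big]d\sigma.
\end{align*}

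The remaining estimates combine three ingredients. First, ergodicity: since $\ell=P_{0,t,T}F(x_1)=\mathbb{E}_{x_1}[P_{\sigma,t,T}F(Y^{(t+T)}_\sigma)]$ by the time-inhomogeneous Markov property of the auxiliary process, Proposition \ref{prop:ergo} applied at starting time $\sigma$ gives $|P_{\sigma,t,T}F(y_i)-\ell|\leq Ce^{-\overline c(t-\sigma)}\|F\|_\infty\,\mathbb{E}_{x_1}[d_\beta(y_i,Y^{(t+T)}_\sigma)]$, and the Lyapunov inequality (Assumption \ref{assu:ergo}.1, which bounds $\mathbb{E}_{x_1}[V(Y^{(t+T)}_\sigma)]\leq V(x_1)+d/c$ uniformly in $\sigma$) turns this into $|P_{\sigma,t,T}F(y_i)-\ell|\leq C'e^{-\overline c(t-\sigma)}\|F\|_\infty\,(1\vee V(y_i))$. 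Second, the first-moment bookkeeping: after dividing by $m(x_0,0,t+T)^2$, the three factors $m(x_0,0,\sigma)$, $m(y_0,\sigma,t+T)$, $m(y_1,\sigma,t+T)$ combine, by the very definition \eqref{eq:defphi} of $\varphi_\sigma$, into $m(x_0,0,\sigma)^{-1}\varphi_\sigma(x_0,y_0)\varphi_\sigma(x_0,y_1)$. Third, the product over $i=0,1$ reconstitutes the operator $J$ of \eqref{eq:J2} applied to $(1\vee V(\cdot))\varphi_\sigma(x_0,\cdot)$, so the inner expectation is exactly the quantity bounded by $D_1e^{\alpha_1\sigma}$ in Assumption \ref{assu:J2phiNt}.2. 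Collecting these, the normalized main term is bounded by a constant multiple of $\int_0^t e^{-2\overline c(t-\sigma)}e^{\alpha_1\sigma}m(x_0,0,\sigma)^{-1}d\sigma$; using $m(x_0,0,\sigma)\geq e^{(c(x_0)-\varepsilon)\sigma}$ from \eqref{eq:c(x)} together with $\alpha_1<c(x_0)$, this integral decays at exponential rate $\min(2\overline c,c(x_0)-\alpha_1)$, i.e. like $v(t)^{-2}$, which combined with the $e^{-c(x_0)t}$ decay of the easy terms gives \eqref{eq:weakl2} and the announced rate $v(t)$.

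The main obstacle is the off-diagonal analysis, in two respects. The conceptual one is the first-moment bookkeeping just described: one must arrange the three $m$-factors against the normalization $m(x_0,0,t+T)^2$ so that exactly one surplus factor $m(x_0,0,\sigma)^{-1}$ remains and the remaining products match the functional $(1\vee V)\varphi_\sigma(x_0,\cdot)$ of Assumption \ref{assu:J2phiNt}.2 — this is what forces the $\sup$ over horizons in \eqref{eq:defphi} and what links the decay rate to $c(x_0)-\alpha_1$. The technical one is the centering by the fixed constant $\ell=P_{0,t,T}F(x_1)$ rather than by the running value $P_{\sigma,t,T}F$: passing $\ell$ through the semigroup identity $\ell=\mathbb{E}_{x_1}[P_{\sigma,t,T}F(Y^{(t+T)}_\sigma)]$ is what makes Proposition \ref{prop:ergo} applicable, and the Lyapunov bound is precisely what keeps the ensuing $V$-moments finite and uniform in $\sigma$. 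Finally, the separation of pairs with a recent common ancestor ($\sigma\in(t,t+T)$), treated crudely over the bounded horizon $T$, must be carried out with enough care that its normalized contribution is genuinely of the faster order $e^{-c(x_0)t}$.
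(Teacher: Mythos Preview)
Your approach is essentially the paper's: expand the square, bound the diagonal crudely, express the off-diagonal via the Many-to-Two (fork) decomposition over the MRCA time $\sigma$, split $\sigma\leq t$ versus $\sigma>t$, and on $\sigma\leq t$ combine Proposition~\ref{prop:ergo} with Assumption~\ref{assu:J2phiNt}.2 after arranging the $m$-factors into $m(x_0,0,\sigma)^{-1}\varphi_\sigma(x_0,\cdot)^2$. Your bookkeeping and your use of the Lyapunov bound to control $\mathbb{E}_{x_1}[V(Y^{(t+T)}_\sigma)]$ uniformly in $\sigma$ match the paper's argument.

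The one place where you diverge is the treatment of pairs with MRCA in $(t,t+T)$. You bound this by $4\|F\|_\infty^2\,\mathbb{E}_{\delta_{x_0}}[\sum_{w\in V_t}N_w^2]$ and assert this is $O(e^{-c(x_0)t})$ ``under the moment controls of Assumption~\ref{assu:J2phiNt}''. That step is not justified: Assumption~\ref{assu:J2phiNt} gives you no uniform-in-trait control on the bounded-horizon second moment $x\mapsto\mathbb{E}_{\delta_x}[N_T^2]$ (that is closer to Assumption~\ref{assu:Ntmt}, which is \emph{not} assumed here). The paper avoids this by keeping the Many-to-Two integral running over all of $[0,t+T]$ and, on $(t,t+T)$, bounding $|P_{\sigma,t,T}F-\ell|$ crudely by $2\|F\|_\infty$; then the same $\varphi_\sigma$--$J$ manipulation and Assumption~\ref{assu:J2phiNt}.2 give a contribution of order $e^{(\alpha_1-c(x_0))t}$, not $e^{-c(x_0)t}$. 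This weaker rate is harmless for the final bound $v(t)^{-2}$, and it is what you should do here as well.
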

As in \cite{guyon2007limit} and \cite{bansaye2011limit}, we could generalize this result to unbounded functions $F$ satisfying specific conditions such as $P_{0,t}^{(t)}F\leq e^{bt}$ for some $b<c(x)$. The rate of convergence of the empirical measure depends both on the growth rate of the population and on the rate that governs the exponential ergodicity for the auxiliary process. The same type of rate of convergence appeared in \cite[Theorem 3]{HO2016}, in the case of an age structured population. 

In order to derive the law of large numbers from the previous result, we need to control the variance of the number of individuals in the population. 
\begin{assumptionA}\label{assu:Ntmt}
For all $x\in\mathcal{X}$,
\[\sup_{t\geq 0}\mathbb{E}_{\delta_x}\left(\left(\frac{N_t}{m(x,0,t)}\right)^2\right)<\infty.
\]
\end{assumptionA}
The meaning of this assumption is that the number of individuals at time $t$ in the population is of the same order as the expected number of individuals in the population at time $t$.
We can now state the law of large numbers.
\begin{cor}\label{cor:lln}
Let $T>0$. Under Assumptions \ref{assu:debut},\ref{assu:regularite},\ref{assu:doeblin},\ref{assu:ergo},\ref{assu:J2phiNt},\ref{assu:Ntmt}, for all bounded measurable functions $F:\mathbb{D}([0,T],\mathcal{X})\rightarrow\mathbb{R}$, for all $x_0,x_1\in\mathcal{X}$, we have,
\begin{align*}
\frac{\sum_{u\in V_{t+T}}F\left(X_{t+s}^{u},s\leq T\right)}{N_{t+T}}-P_{0,t,T}F(x_1)\xrightarrow[t\rightarrow +\infty]{} 0\text{, in }\mathbb{L}_2(\delta_{x_0}).
\end{align*}
\end{cor}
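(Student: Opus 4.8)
The plan is to reduce the statement to Theorem~\ref{thm:weakl2} by a purely algebraic manipulation and then to upgrade the resulting control from normalisation by $m(x_0,0,t+T)$ to normalisation by $N_{t+T}$. Write $P:=P_{0,t,T}F(x_1)$ and introduce
\begin{align*}
A_t:=\sum_{u\in V_{t+T}}\frac{F\left(X^u_{t+s},s\leq T\right)-P}{m(x_0,0,t+T)},\qquad W_t:=\frac{N_{t+T}}{m(x_0,0,t+T)},
\end{align*}
so that Theorem~\ref{thm:weakl2} states exactly $\E_{\delta_{x_0}}[A_t^2]\to 0$, i.e. $A_t\to 0$ in $\mathbb{L}_2(\delta_{x_0})$. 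Since each death is replaced by two children and $\int_0^t B(s,X^x_s)ds\to+\infty$ (Assumption~\ref{assu:debut}), the size $N_t$ is non-decreasing with $N_{t+T}\geq 1$, so the target quantity
\begin{align*}
g_t:=\frac{\sum_{u\in V_{t+T}}F\left(X_{t+s}^{u},s\leq T\right)}{N_{t+T}}-P=\frac{A_t}{W_t}
\end{align*}
is well defined and satisfies the uniform bound $|g_t|\leq 2\|F\|_\infty$ because $F$ is bounded. By definition of $m$ one also has $\E_{\delta_{x_0}}[W_t]=1$, and Assumption~\ref{assu:Ntmt} gives $\sup_t\E_{\delta_{x_0}}[W_t^2]<\infty$.

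Given this, I would first prove that $g_t\to 0$ in probability and then upgrade to $\mathbb{L}_2$. Convergence in probability follows from $A_t\to 0$ in probability (a consequence of the $\mathbb{L}_2$ convergence) provided $W_t$ stays bounded away from $0$ in the limit. The cleanest way to secure this is to show that $W_t$ converges in $\mathbb{L}_2(\delta_{x_0})$ to a limit $W$ with $\E[W]=1$ and $W\geq 0$, and then that $W>0$ almost surely. The $\mathbb{L}_2$ convergence of $W_t$ should be obtained by the same second-moment machinery as for Theorem~\ref{thm:weakl2}: a many-to-two computation expresses $\E_{\delta_{x_0}}[W_sW_t]$ and $\E_{\delta_{x_0}}[W_t^2]$ through the functional $J$ of \eqref{eq:J2} and the quantities $\varphi_s$ of \eqref{eq:defphi}, and the exponential ergodicity of Proposition~\ref{prop:ergo} together with Assumptions~\ref{assu:J2phiNt} and~\ref{assu:Ntmt} then shows that $(W_t)$ is Cauchy in $\mathbb{L}_2$. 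Positivity of the limit comes from the branching property: conditioning on the configuration at a fixed time and using the independence of the subfamilies gives $\mathbb{P}(W=0)\in\{0,1\}$, and $\E[W]=1$ excludes the value $1$, so $W>0$ almost surely. Since $W_t\to W$ in probability with $W>0$ a.s., we get $1/W_t\to 1/W$ in probability, hence $g_t=A_t/W_t\to 0$ in probability.

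To conclude, I would invoke the uniform bound $|g_t|\leq 2\|F\|_\infty$: the family $(g_t^2)$ is uniformly integrable, so convergence in probability forces $\E_{\delta_{x_0}}[g_t^2]\to 0$, which is exactly the asserted $\mathbb{L}_2(\delta_{x_0})$ convergence.

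I expect the main obstacle to be the lower control of $W_t=N_{t+T}/m(x_0,0,t+T)$. Assumption~\ref{assu:Ntmt} only provides an upper second-moment bound, which by itself does not prevent $N_{t+T}$ from being much smaller than its mean on an event of non-vanishing probability; a naive splitting of $\E_{\delta_{x_0}}[g_t^2]$ over $\{W_t\geq\eta\}$ and $\{W_t<\eta\}$ therefore leaves a term of size $\|F\|_\infty^2\,\mathbb{P}(W_t<\eta)$ that the upper bound alone cannot make small. The real work is thus to turn the first- and second-moment information ($\E[W_t]=1$ and $\sup_t\E[W_t^2]<\infty$) into genuine control of the lower tail of $W_t$, which I would do through the $\mathbb{L}_2$-convergence-plus-positivity route above; establishing that $W_t$ is Cauchy in $\mathbb{L}_2$ is precisely where the ergodicity estimate of Proposition~\ref{prop:ergo} and the moment control of Assumption~\ref{assu:J2phiNt} are genuinely needed.
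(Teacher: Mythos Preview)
Your route is quite different from the paper's. The paper carries out precisely the ``naive splitting'' you warn against: it bounds
\[
\mathbb{E}_{\delta_{x_0}}\bigl[g_t^2\bigr]\;\leq\;\delta^2\,\mathbb{E}_{\delta_{x_0}}\bigl[A_t^2\bigr]\;+\;4\|F\|_\infty^2\,\mathbb{P}_{\delta_{x_0}}\bigl(W_t\leq\delta^{-1}\bigr),
\]
and then controls the second probability by Paley--Zygmund, obtaining $\mathbb{P}_{\delta_{x_0}}(W_t\leq\delta^{-1})\leq 1-(1-\delta^{-1})^2/g(x_0)$ with $g(x_0)=\sup_t\mathbb{E}_{\delta_{x_0}}[W_t^2]$. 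No convergence of $W_t$ is invoked; the paper even remarks explicitly that it has not established the convergence of $N_t/m(x_0,0,t)$. So your programme of proving $W_t\to W>0$ in $\mathbb{L}_2$ is a genuinely different and more ambitious strategy.

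You have put your finger on the real difficulty, and in fact the paper's shortcut does not close it as written: the Paley--Zygmund bound tends to $1-1/g(x_0)$ as $\delta\to\infty$, which is strictly positive whenever $g(x_0)>1$, so the second term cannot be made smaller than $4\|F\|_\infty^2(1-1/g(x_0))$ for any choice of $\delta$. Your alternative would repair this, but two of your steps are only sketched and are not consequences of results already in the paper. First, the Cauchy property of $(W_t)$ in $\mathbb{L}_2$ is not an instance of Theorem~\ref{thm:weakl2} (taking $F\equiv 1$ there gives the triviality $0\to 0$); it needs a separate many-to-two computation showing that $\mathbb{E}[(W_t-W_s)^2]\to 0$, and the ergodic estimate enters in a different place than in the proof of Theorem~\ref{thm:weakl2}. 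Second, and more seriously, the $0$--$1$ argument for $\{W=0\}$ is delicate here: in this time-inhomogeneous, trait-dependent model the subfamilies issued at a fixed time $s$ do \emph{not} have identically distributed normalized limits, so conditioning on $\mathcal{F}_s$ yields $\mathbb{P}(W=0)=\mathbb{E}\bigl[\prod_{u\in V_s}q_s(X_s^u)\bigr]$ with $q_s(\cdot)$ a genuine function of the trait and the time, and the classical fixed-point argument forcing $q\in\{0,1\}$ does not apply without further work.
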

\begin{rem}
It is possible to extend this convergence to population processes allowing death events i.e. if $p_0\not\equiv 0$. In this case, the convergence is only valid on the survival event $\left\lbrace N_t>0\right\rbrace$.
\end{rem}
\begin{rem}
We are not able to give the rate of convergence in this case because we did not prove the convergence of $(N_tm(x,t)^{-1},t\geq 0)$, for $x\in\mathcal{X}$.
\end{rem}
In the case of a division rate that does not depend on time, even if the auxiliary process is still time-inhomogeneous, it converges when time goes to infinity according to Proposition \ref{prop:conv_homogene}. Therefore, we obtain the following result. 
\begin{cor}\label{cor:lln_hom}
Let $T>0$. Under Assumptions \ref{assu:debut},\ref{assu:regularite},\ref{assu:doeblin},\ref{assu:ergo},\ref{assu:J2phiNt},\ref{assu:Ntmt}, if $B(t,x)\equiv B(x)$ for all $t\geq 0$ and $x\in\mathcal{X}$, there exists a probability measure $\Pi$ on the Borel $\sigma$-field of $\mathbb{D}\left([0,T],\mathcal{X}\right)$ endowed with the Skorokhod distance such that:
\begin{align*}
\frac{\sum_{u\in V_{t+T}}F\left(X_{t+s}^{u},s\leq T\right)}{N_{t+T}}\xrightarrow[t\rightarrow +\infty]{} \Pi(F)\text{, in }\mathbb{L}_2(\delta_{x_0}).
\end{align*}
\end{cor}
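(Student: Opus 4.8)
The plan is to obtain this corollary as an immediate consequence of the general law of large numbers in Corollary~\ref{cor:lln} combined with the convergence of the auxiliary semigroup established in Proposition~\ref{prop:conv_homogene}, linked by a single triangle inequality in $\mathbb{L}_2(\delta_{x_0})$.

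Concretely, for fixed $x_0,x_1\in\mathcal{X}$ and $T>0$, I would start from the decomposition
\begin{align*}
\left\Vert \frac{\sum_{u\in V_{t+T}}F\left(X_{t+s}^u,s\leq T\right)}{N_{t+T}}-\Pi(F)\right\Vert_{\mathbb{L}_2(\delta_{x_0})} &\leq \left\Vert \frac{\sum_{u\in V_{t+T}}F\left(X_{t+s}^u,s\leq T\right)}{N_{t+T}}-P_{0,t,T}F(x_1)\right\Vert_{\mathbb{L}_2(\delta_{x_0})}\\
&\quad +\left\Vert P_{0,t,T}F(x_1)-\Pi(F)\right\Vert_{\mathbb{L}_2(\delta_{x_0})}.
\end{align*}
Under Assumptions~\ref{assu:debut},\ref{assu:regularite},\ref{assu:doeblin},\ref{assu:ergo},\ref{assu:J2phiNt},\ref{assu:Ntmt}, the first term on the right tends to $0$ as $t\to+\infty$ directly by Corollary~\ref{cor:lln}. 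For the second term, the key observation is that $P_{0,t,T}F(x_1)$ is a deterministic real number, since it is the expectation of $F$ under the auxiliary process started from the fixed point $x_1$ and does not depend on the realization of the branching process; hence its $\mathbb{L}_2(\delta_{x_0})$-norm reduces to the absolute value $\left|P_{0,t,T}F(x_1)-\Pi(F)\right|$. Because the hypothesis $B(t,x)\equiv B(x)$ places us precisely in the homogeneous setting of Proposition~\ref{prop:conv_homogene}, this quantity is bounded by
\begin{align*}
Ce^{-\overline{c}t}\left\Vert F\right\Vert_{\infty}\left(2+2\beta V(x_1)+\beta\frac{d}{c}\right),
\end{align*}
which vanishes as $t\to+\infty$ since $\overline{c}>0$ and $V(x_1)<+\infty$. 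Combining the two bounds yields the claimed $\mathbb{L}_2(\delta_{x_0})$-convergence to $\Pi(F)$, with $\Pi$ the probability measure furnished by Proposition~\ref{prop:conv_homogene}.

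There is no substantive obstacle: the two genuinely nontrivial ingredients — the variance control underlying Corollary~\ref{cor:lln} and the existence together with the exponential approach of the limiting measure $\Pi$ in the homogeneous case — are already in hand. The only point requiring a word of care is that the centering term $P_{0,t,T}F(x_1)$ is deterministic, which is exactly what lets its contribution to the $\mathbb{L}_2$-norm be controlled by the pointwise estimate of Proposition~\ref{prop:conv_homogene} rather than by a second probabilistic argument. One should also note that $\Pi$ depends on $T$, as it lives on $\mathbb{D}([0,T],\mathcal{X})$, but not on the reference points $x_0,x_1$, so the limit is genuinely independent of the arbitrary choice of $x_1$ used in the intermediate centering.
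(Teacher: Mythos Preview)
Your proposal is correct and follows exactly the approach of the paper, which simply states that Corollary~\ref{cor:lln_hom} ``is a direct consequence of Proposition~\ref{prop:conv_homogene} and Corollary~\ref{cor:lln}.'' Your triangle-inequality decomposition and the observation that the centering term is deterministic make explicit what the paper leaves implicit, but there is no difference in substance.
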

Therefore, the empirical measure of ancestral trajectories converges toward the limit of the auxiliary process. 
\subsection{Proofs}\label{sub:proofs}
We first give a useful inequality. Combining the first point of Assumption \ref{assu:ergo} and Dynkin's formula applied to $x\mapsto e^{ct}V(x)$ where $c,V$ are defined in Assumption \ref{assu:ergo}, we have,
\begin{align}\label{eq:assu1avecsemi}
P_{r,s}^{(t)}V(x)\leq e^{-c(s-r)}V(x)+\frac{d}{c}\left(1-e^{-c(s-r)}\right).
\end{align}
We will use this inequality in the two following subsections. 
\subsubsection{Proof of Proposition \ref{prop:ergo}}
This is adapted from \cite[Theorem 3.1]{hairer2011yet}. We consider the semi-norm on measurable functions from $\mathcal{X}$ into $\mathbb{R}$ defined by
\begin{align*}
\vvvert f\vvvert_{\beta}=\sup_{x\neq y}\frac{\left|f(x)-f(y)\right|}{d_\beta (x,y)}.
\end{align*}
We also introduce the following weighted norm:
\begin{align*}
\left\Vert f\right\Vert_{\beta}=\sup_x\frac{|f(x)|}{1+\beta V(x)}.
\end{align*}
\paragraph*{Step 1.}Let $0\leq r\leq s\leq t$ and $f:\mathcal{X}\rightarrow\mathbb{R}$ be a bounded measurable function. First, we prove that for all $\Delta>0$, there exists $\overline{\alpha}_{\Delta}\in (0,1)$ and $\beta_{\Delta}>0$ such that for all $r>0$ and all $t\geq r+\Delta$,
\begin{align}\label{eq:contract}
\vvvert P_{r,r+\Delta}^{(t)}f\vvvert_{\beta_{\Delta}}\leq \overline{\alpha}_{\Delta}\vvvert f\vvvert_{\beta_{\Delta}}.
\end{align}
Let $\beta>0$ that will be specified later. Fix $R>\frac{2d}{c}$ and $f:\mathcal{X}\rightarrow \mathbb{R}$ such that $\vvvert f\vvvert_{\beta}\leq 1$. Using Lemma 2.1 in \cite{hairer2011yet}, we can assume without loss of generality that $\left\Vert f\right\Vert_{\beta}\leq 1$. To obtain \eqref{eq:contract}, it is sufficient to prove that for all $x,y\in\mathcal{X}$, there exists $\overline{\alpha}_{\Delta}\in(0,1)$ and $\beta_{\Delta}>0$ such that
\begin{align*}
\left|P_{r,r+\Delta}^{(t)}f(x)-P_{r,r+\Delta}^{(t)}f(y)\right|\leq \overline{\alpha}_{\Delta}d_{\beta_{\Delta}}(x,y).
\end{align*}
If $x=y$, the claim is true. Let $x\neq y\in\mathcal{X}$.
We assume first that $x$ and $y$ are such that
\begin{align*}
V(x)+V(y)\geq R.
\end{align*} 
Then, we have
\begin{align*}
\left|P_{r,r+\Delta}^{(t)}f(x)-P_{r,r+\Delta}^{(t)}f(y)\right|\leq 2+\beta P_{r,r+\Delta}^{(t)}V(x)+\beta P_{r,r+\Delta}^{(t)}V(y),
\end{align*}
because $\left\Vert f\right\Vert_{\beta}\leq 1$. Next, using \eqref{eq:assu1avecsemi}, we obtain
\begin{align*}
\left|P_{r,r+\Delta}^{(t)}f(x)-P_{r,r+\Delta}^{(t)}f(y)\right|&\leq 2 +\beta e^{-c\Delta}\left(V(x)+V(y)\right)+2\beta\frac{d}{c}\left(1-e^{-c\Delta}\right)\\
&\leq 2 +\beta e^{-c\Delta}\left(V(x)+V(y)\right)+2\beta\frac{d}{Rc}(V(x)+V(y))\left(1-e^{-c\Delta}\right).
\end{align*}
Let $\gamma_{\Delta}^0=e^{-c\Delta}+\frac{2d}{Rc}\left(1-e^{-c\Delta}\right)$. We have $\gamma_{\Delta}^0<1$. Then,
\begin{align}\label{eq:E1}\nonumber
\left|P_{r,r+\Delta}^{(t)}f(x)-P_{r,r+\Delta}^{(t)}f(y)\right|&\leq 2+\beta \gamma_{\Delta}^0(V(x)+V(y))\\\nonumber
&\leq \left(\frac{2+\gamma_{\Delta}^0\beta(V(x)+V(y))}{2+\beta (V(x)+ V(y))}\right)(2+\beta V(x)+\beta V(y))\\
&\leq \gamma_{\Delta}^1d_{\beta}(x,y),
\end{align}
where
\[
\gamma_{\Delta}^1=\frac{2+\beta R\gamma_{\Delta}^0}{2+\beta R}<1.
\]
Assume now that $x$ and $y$ are such that
\[
V(x)+V(y)< R.
\]
Let us consider the following linear operator: $$\widetilde{P}_{r,r+\Delta}^{(t)}=\frac{1}{1-\alpha_{\Delta}}P_{r,r+\Delta}^{(t)}-\frac{\alpha_{\Delta}}{1-\alpha_{\Delta}}\nu_{r,r+\Delta},$$
where $\alpha_{\Delta}$ is given in Assumption \ref{assu:ergo}2.
We have
\begin{align*}
\left|P_{r,r+\Delta}^{(t)}f(x)-P_{r,r+\Delta}^{(t)}f(y)\right|=(1-\alpha_{\Delta})|\widetilde{P}_{r,r+\Delta}^{(t)}f(x)-\widetilde{P}_{r,r+\Delta}^{(t)}f(y)|.
\end{align*}
According to the second point of Assumption \ref{assu:ergo}, $\widetilde{P}_{r,r+\Delta}^{(t)}f(x)\geq 0$ for all $f\geq 0$ and $x\in B(R,V)$. Then,
\begin{align*}
\left|P_{r,r+\Delta}^{(t)}f(x)-P_{r,r+\Delta}^{(t)}f(y)\right|\leq (1-\alpha_{\Delta})\left(\widetilde{P}_{r,r+\Delta}^{(t)}f(x)+\widetilde{P}_{r,r+\Delta}^{(t)}f(y)\right).
\end{align*}
Next, using that $\left\Vert f\right\Vert _{\beta}\leq 1$ and that $\widetilde{P}_{r,r+\Delta}^{(t)}V(x)\leq \frac{1}{1-\alpha_{\Delta}}P_{r,r+\Delta}^{(t)}V(x)$, we get
\begin{align*}
\left|P_{r,r+\Delta}^{(t)}f(x)-P_{r,r+\Delta}^{(t)}f(y)\right|&\leq 2\left(1-\alpha_{\Delta}\right)+\beta\left(P_{r,r+\Delta}^{(t)}V(x)+P_{r,r+\Delta}^{(t)}V(y)\right)\\
&\leq 2\left(1-\alpha_{\Delta}+\beta\frac{d}{c}\left(1-e^{-c\Delta}\right)\right)+\beta e^{-c\Delta}(V(x)+V(y)),
\end{align*}
where the second inequality comes from \eqref{eq:assu1avecsemi}. 
Let $\alpha_{\Delta}^0\in(0,\frac{2d}{Rc}\alpha_{\Delta})$. Then, fixing $$\beta=\beta_{\Delta}:=cd^{-1}\alpha_{\Delta}^0(1-e^{-c\Delta})^{-1},$$ yields
\begin{align}\label{eq:E2}\nonumber
\left|P_{r,r+\Delta}^{(t)}f(x)-P_{r,r+\Delta}^{(t)}f(y)\right|&\leq 2\left(1-\alpha_{\Delta}+\alpha_{\Delta}^0\right)+\beta_{\Delta} e^{-c\Delta}(V(x)+V(y))\\
&\leq \gamma_{\Delta}^2 d_{\beta_{\Delta}}(x,y),
\end{align}
where $$\gamma_{\Delta}^2=e^{-c\Delta}\vee (1-(\alpha_{\Delta}-\alpha_{\Delta}^0)).$$
Finally, combining \eqref{eq:E1} and \eqref{eq:E2} and noticing that $\gamma_{\Delta}^1>e^{-c\Delta}$, yields the result with $\overline{\alpha}_{\Delta}=\gamma_{\Delta}^1\vee (1-(\alpha_{\Delta}-\alpha_{\Delta}^0))$.

\paragraph*{Step 2.} We now prove \eqref{eq:ergo_traj}. Conditioning with respect to $\sigma\left(Y_{u}^{(t+T)},r\leq u\leq t\right)$ and using the Markov property, we obtain
\begin{align}\label{eq:1}
P_{r,t,T}F(x)-P_{r,t,T}F(y)&=\int_{\mathcal{X}}P_{t,t,T}F(z)\left(P_{r,t}^{(t+T)}(x,dz)-P_{r,t}^{(t+T)}(y,dz)\right).
\end{align}
For all $z\in\mathcal{X}$, we set $g(z)=P_{t,t,T}F(z)$. Let $\Delta>0$. Let $l(r,t)\in\mathbb{N}$ and $ \varepsilon_{r,t}\geq 0$ be such that $t-r=l(r,t)\Delta+\varepsilon_{r,t}$ and $\varepsilon_{r,t}<\Delta$. Using \eqref{eq:contract}, we have
\begin{align*}
\left|P_{r,t}^{(t+T)}g(x)-P_{r,t}^{(t+T)}g(y)\right|&=\left|P_{r,r+\Delta}^{(t+T)}P_{r+\Delta,t}^{(t+T)}g(x)-P_{r,r+\Delta}^{(t+T)}P_{r+\Delta,t}^{(t+T)}g(y)\right|\\
&\leq \overline{\alpha}_{\Delta}d_{\beta_{\Delta}}(x,y)\vvvert P_{r+\Delta,t}^{(t+T)}g\vvvert_{\beta_{\Delta}}\\
&\leq \left(\overline{\alpha}_{\Delta}\right)^{l(r,t)}d_{\beta}(x,y)\left\Vert g\right\Vert_{\infty},
\end{align*}
where $\beta = cd^{-1}$. Finally, we obtain
\begin{align}\label{eq:2}
\left|P_{r,t}^{(t+T)}g(x)-P_{r,t}^{(t+T)}g(y)\right|\leq Ce^{-\overline{c}(t-r)}d_{\beta}(x,y)\left\Vert g\right\Vert_{\infty},
\end{align}
where $C:=1+\frac{cR}{2d}$ and
\begin{align}\label{eq:cbar}
\overline{c}:=\sup_{\Delta>0}\log(\overline{\alpha}_{\Delta}^{-1})\Delta^{-1}.
\end{align}
In particular, $\overline{c}<c$ because $\overline{\alpha}_{\Delta}>e^{-c\Delta}$. Finally, combining \eqref{eq:1} and \eqref{eq:2}, and using that $\left\Vert P_{t,t+T}^{(t+T)}F\right\Vert_{\infty}\leq \left\Vert F\right\Vert_{\infty}$,  we get the result. 
\subsubsection{Proof of Proposition \ref{prop:conv_homogene}}
Let $F:\mathbb{D}\left([0,T],\mathcal{X}\right)\rightarrow \mathbb{R}$ be a bounded measurable function. We have for all $t,r\geq 0$,
\begin{align*}
P_{0,t+r,T}F(x)=\mathbb{E}_x\left[F\left(Y_{t+r+s}^{(t+r+T)},s\leq T\right)\right]=\mathbb{E}_x\left[\mathbb{E}\left[F\left(Y_{t+r+s}^{(t+r+T)},s\leq T\right)\big|\mathcal{F}_{r}^{(t+r+T)}\right]\right].
\end{align*}
Using the Markov property, we have
\begin{align*}
P_{0,t+r,T}F(x)=\int_{\mathcal{X}}P_{r,t+r,T}F(y)P_{0,r}^{(t+r+T)}(x,dy).
\end{align*}
Since $B$ does not depend on time, we have $m(y,r,t+r+T)=m(y,0,t+T)$. Then, using the Many-to-One formula \eqref{mtomesurable} and the Markov property, we get
\begin{align*}
P_{r,t+r,T}F(y)=\frac{\mathbb{E}\left[\sum_{u\in V_{t+T}}F\left(X_{t+s}^{u},s\leq T\right)\big|Z_{0}=\delta_y\right]}{m(y,0,t+T)}=P_{0,t,T}F(y),
\end{align*}
so that
\begin{align*}
P_{0,t+r,T}F(x)=\int_{\mathcal{X}}P_{0,t,T}F(y)P_{0,r}^{(t+r+T)}(x,dy).
\end{align*}
Next,
\begin{align*}
\left|P_{0,t+r,T}F(x)-P_{0,t,T}F(x)\right|\leq \int_{\mathcal{X}}\left|P_{0,t,T}F(y)-P_{0,t,T}F(x)\right|P_{0,r}^{(t+r+T)}(x,dy).
\end{align*}
Then, according to \eqref{eq:ergo_traj}, there exist $\overline{c}>0$, $\beta>0$ and a constant $C>0$ such that
\begin{align*}
\left|P_{0,t+r,T}F(x)-P_{0,t,T}F(x)\right|&\leq Ce^{-\overline{c}t}\left\Vert F \right\Vert_{\infty}\int_{\mathcal{X}}\left(2+\beta V(y)+\beta V(x)\right)P_{0,r}^{(t+r+T)}(x,dy)\\
&\leq Ce^{-\overline{c}t}\left\Vert F \right\Vert_{\infty}\left(2+2\beta V(x)+\beta\frac{d}{c}\right)\xrightarrow[r,t\rightarrow +\infty]{} 0,
\end{align*}
where the last inequality comes from \eqref{eq:assu1avecsemi}. 
Finally, $(P_{0,t,T}F(x),t\geq 0)$ is a Cauchy sequence in $\mathcal{X}$ which is complete. Then, it has a limit as $t\rightarrow +\infty$ and this limit is independent of $x$ by \eqref{eq:ergo_traj}.
\subsubsection{Proof of Theorem \ref{thm:weakl2}, Corollary \ref{cor:lln} and Corollary \ref{cor:lln_hom}}\label{sec:proof_lln} 
Let $F:\mathbb{D}([0,T],\mathcal{X})\rightarrow \mathbb{R}_+$ be a bounded measurable function. 
For all $x\in\mathbb{D}([0,t+T],\mathcal{X})$ and $x_1\in\mathcal{X}$, we define the following function:\[\phi_{t,T}(x_1,(x_s,s\leq t+T))=F\left(x_{t+s},s\leq T\right)-P_{0,t,T}F(x_1).\] 
\begin{proof}[Proof of Theorem \ref{thm:weakl2}]
Fix $x_0\in\mathcal{X}$. Let $\varepsilon>0$ be such that $c(x_0)-\alpha_1>\varepsilon$, where $\alpha_1$ is defined in Assumption \ref{assu:J2phiNt}. Let $t>0$ be such that $$c(x_0)<\inf_{s\geq t}\frac{\log(m(x_0,0,s))}{s}+\varepsilon.$$ 
We have
\begin{align*}
&\mathbb{E}_{\delta_{x_0}}\left[\left(\sum_{u\in V_{t+T}}\frac{\phi_{t,T}(x_1,(X^u_s,s\leq t+T))}{m(x_0,0,t+T)}\right)^2\right]=A(t,T)+B(t,T),
\end{align*}
where
\begin{align*}
&A(t,T)=m(x_0,0,t+T)^{-2}\mathbb{E}_{\delta_{x_0}}\left[\sum_{u\in V_{t+T}}\phi_{t,T}(x_1,\left(X^u_s, s\leq t+T\right))^2\right],\\
&B(t,T)=m(x_0,0,t+T)^{-2}\mathbb{E}_{\delta_{x_0}}\left[\sum_{u\neq v\in V_{t+T}}\phi_{t,T}(x_1,\left(X^u_s, s\leq t+T\right))\phi_{t,T}(x_1,\left(X^v_s, s\leq t+T\right))\right].
\end{align*}
For the first term, using that $\phi_{t,T}(x_1,\left(X^u_s, s\leq t+T\right))^2\leq 4\left\Vert F\right\Vert_{\infty}^2$, we get
\begin{align*}
A(t,T)\leq 4 e^{-(c(x_0)-\varepsilon)(t+T)}\left\Vert F\right\Vert_{\infty}^2\xrightarrow[t\rightarrow +\infty]{} 0.
\end{align*}
For the second term, using the Many-to-One formula for forks \cite[Proposition 3.6]{marguet2016}, we have
\begin{align*}
m(x_0,0,t+T)^2B(t,T)=\int_0^{t+T} m(x_0,0,s)\mathbb{E}_{x_0}\left[B\left(Y_s^{(s)}\right)J_{s,t+T}\phi_{t,T}(x_1,\cdot)\left(Y_{r}^{(s)},r\leq s\right)\right]ds,
\end{align*}
where for $x\in\mathbb{D}([0,s],\mathcal{X})$,
\begin{align*}
J_{s,t+T}\phi_{t,T}(x_1,\cdot)\left(x\right)=&2\int_{\mathcal{X}^2} m\left(y_0,s,t+T\right)\mathbb{E}\left[\phi_{t,T}\left(x_1,\left(\widetilde{Y}^{(t+T)}_{r},r\leq t+T\right)\right)\middle|Y_s^{(t+T)}=y_0\right]\\
& m\left(y_1,s,t+T\right)\mathbb{E}\left[\phi_{t,T}\left(x_1,\left(\widetilde{Y}^{(t+T)}_{r},r\leq t+T\right)\right)\middle|Y_s^{(t+T)}=y_1\right]Q(x_s,dy_0,dy_1),
\end{align*}
where
\begin{align*}
\widetilde{Y}^{(t+T)}_{r}=\left\{\begin{array}{lc}
x_r&\text{ if }r<s,\\
Y_{r}^{(t+T)}&\text{ if }s\leq r\leq t+T.
\end{array}\right.
\end{align*}
We split the integral into two parts:
\begin{align*}
B(t,T)=I_1+I_2,
\end{align*}
where
\begin{align*}
I_1&=m(x_0,0,t+T)^{-2}\int_{t}^{t+T} m(x_0,0,s)\mathbb{E}_{x_0}\left[B\left(Y_s^{(s)}\right)J_{s,t+T}\phi_{t,T}(x_1,\cdot)\left(Y_{r}^{(s)},r\leq s\right)\right]ds,\\
I_2&=m(x_0,0,t+T)^{-2}\int_{0}^{t} m(x_0,0,s)\mathbb{E}_{x_0}\left[B\left(Y_s^{(s)}\right)J_{s,t+T}\phi_{t,T}(x_1,\cdot)\left(Y_{r}^{(s)},r\leq s\right)\right]ds.
\end{align*}
For the first integral, we have
\begin{align*}
I_1&\leq 4\left\Vert F\right\Vert_{\infty}^2 \int_{t}^{t+T} m(x_0,0,s)^{-1}\mathbb{E}_{x_0}\left[B\left(Y_s^{(s)}\right)J\varphi_s\left(x_0,\cdot\right)\left(Y_s^{(s)}\right)\right]ds\\
&\leq 4\left\Vert F\right\Vert_{\infty}^2 \int_{t}^{t+T} e^{-(c(x_0)-\varepsilon)s}D_1e^{\alpha_1 s}ds\\
&\leq 4\left\Vert F\right\Vert_{\infty}^2\frac{D_1}{c(x_0)-\alpha_1-\varepsilon}e^{\left(\alpha_1-c(x_0)+\varepsilon\right)t}\xrightarrow[t\rightarrow +\infty]{} 0,
\end{align*}
where the second inequality comes from Assumption \ref{assu:J2phiNt}.
Therefore, we only have to deal with the remaining integral $I_2$.
First, we notice that for any $0\leq s\leq t$ and $0\leq r\leq T$,
\begin{align*}
\widetilde{Y}^{(t+T)}_{t+r}=Y_{t+r}^{(t+T)}.
\end{align*}
Therefore, we get
\begin{align*}
\phi_{t,T}\left(x_1,\left(\widetilde{Y}^{(t+T)}_{r},r\leq t+T\right)\right)=\phi_{t,T}\left(x_1,\left(Y^{(t+T)}_{r},r\leq t+T\right)\right).
\end{align*}
Next, Assumption \ref{assu:J2phiNt} yields
\begin{align*}
I_2\leq \int_{0}^{t}&m(x_0,0,s)^{-1}\\
&\times\mathbb{E}_{x_0}\left[B\left(Y_s^{(s)}\right)J\left(\varphi_s(x_0,\cdot)\E\left(\phi_{t,T}\left(x_1,\left(Y^{(t+T)}_{r},r\leq t+T\right)\right)\middle|Y_s^{(t+T)}=\cdot\right)\right)\left(Y_{s}^{(s)}\right)\right]ds.
\end{align*}
Moreover, for any $y\in\mathcal{X}$ and $s\leq t$, we have
\begin{align*}
\E\left(\phi_{t,T}\left(x_1,\left(Y^{(t+T)}_{r},r\leq t+T\right)\right)\Big|Y_{s}^{(t+T)}=y\right)=P_{s,t,T}F(y)-P_{0,t,T}F(x_1).
\end{align*}
According to Proposition \ref{prop:ergo}, there exists $\overline{c}>0$, $\beta>0$ and $C>0$ such that
\begin{align*}
\left|P_{s,t,T}F(y)-P_{0,t,T}F(x_1)\right|\leq Ce^{-\overline{c}(t-s)}\left\Vert P_{t,t+T}^{(t+T)}F\right\Vert_{\infty}\int_{\mathcal{X}}	d_{\beta}(y,x_2)P_{0,s}^{(t+T)}(x_1,dx_2).
\end{align*}
Finally:
\begin{align*}
\left|P_{s,t,T}F(y)-P_{0,t,T}F(x_1)\right|\leq Ce^{-\overline{c}(t-s)}\left\Vert F\right\Vert_{\infty}\left(2+\beta V(y)+\beta V(x_1)+\beta \frac{d}{c}\right).
\end{align*}
Then, we have
\begin{align*}
I_2\leq& C\left\Vert F\right\Vert_{\infty}^2\int_0^{t}e^{-2\overline{c}(t-s)}m(x_0,0,s)^{-1}\\
&\times\mathbb{E}_{x_0}\left[B\left(Y_s^{(s)}\right)J\left(\varphi_s(x_0,\cdot)\left(2+\beta V(\cdot)+\beta V(x_1)+\beta \frac{d}{c}\right)\right)\left(Y_{s}^{(s)}\right)\right]ds.
\end{align*}
Next, using Assumption \ref{assu:J2phiNt} we obtain
\begin{align*}
I_2\leq& C \left\Vert F\right\Vert_{\infty}^2\left(2+\beta+\beta V(x_1)+\beta\frac{d}{c}\right)\int_0^{t}e^{-2\overline{c}(t-s)}e^{(\alpha_1-c(x_0)+\varepsilon)s}ds,
\end{align*}
where $C>0$ denotes a positive constant which can vary from line to line. Then,
\begin{align*}
I_2\leq C &\left\Vert F\right\Vert_{\infty}^2e^{-2\overline{c}t}\int_0^{t}e^{(\alpha_1-c(x_0)+2\overline{c}+\varepsilon)s}ds\\
\leq C &\frac{\left\Vert F\right\Vert_{\infty}^2}{\alpha_1-c(x_0)+2\overline{c}+\varepsilon}e^{-2\overline{c}t}\left(e^{(\alpha_1-c(x_0)+2\overline{c}+\varepsilon)t}-1\right)\\
\leq C &\frac{\left\Vert F\right\Vert_{\infty}^2}{\alpha_1-c(x_0)+2\overline{c}+\varepsilon}\left(e^{(\alpha_1-c(x_0)+\varepsilon)t}-e^{-2\overline{c}t}\right)\\
\leq C &\left\Vert F\right\Vert_{\infty}^2 e^{-\min(2\overline{c},c(x_0)-\alpha_1-\varepsilon)t}. 
\end{align*}
Finally, we obtain
\begin{align*}
A(t,T)+B(t,T) \leq C \left\Vert F\right\Vert_{\infty}^2 e^{-\min(2\overline{c},c(x_0)-\alpha_1-\varepsilon)t}
\end{align*}
where $C$ is a constant depending on $x_0, \beta, V(x_1), c, d,c(x_0),\alpha_1,R$.
\end{proof}
We now prove Corollary \ref{cor:lln}.
\begin{proof}[Proof of Corollary \ref{cor:lln}]
Let $T>0$, $\varepsilon>0$, $x_0\in\mathcal{X}$ and let $F:\mathbb{D}([0,T],\mathcal{X})\rightarrow\mathbb{R}$ be a bounded measurable function. Let $\delta>0$. We have
\begin{align*}
\mathbb{E}_{\delta_{x_0}}\left[\left(\frac{\sum_{u\in V_{t+T}}\phi_{t,T}\left(x_1,\left(X_s^{u},s\leq t+T\right)\right)}{N_{t+T}}\right)^2\right]\leq & \delta^2\mathbb{E}_{\delta_{x_0}}\left[\left(\frac{\sum_{u\in V_{t+T}}\phi_{t,T}\left(x_1,\left(X_s^{u},s\leq t+T\right)\right)}{m(x_0,0,t+T)}\right)^2\right]\\
&+4\left\Vert F\right\Vert_{\infty}^2\mathbb{P}_{\delta_{x_0}}\left(N_tm(x_0,0,t+T)^{-1}\leq \delta^{-1}\right).
\end{align*}
According to Paley-Zygmund inequality and Assumption \ref{assu:Ntmt}, we have
\begin{align}\label{eq:preuve_lln}
\mathbb{P}_{\delta_{x_0}}\left(N_t\leq \delta^{-1} m(x_0,0,t+T)\right)\leq 1-(1-\delta^{-1})^2\mathbb{E}_{\delta_{x_0}}\left[\left(\frac{N_{t+T}}{m(x_0,0,t+T)}\right)^2\right]^{-1}\leq 1- \frac{(1-\delta^{-1})^2}{g(x_0)},
\end{align}
where $g:\mathcal{X}\rightarrow \mathbb{R}_+$ is such that for all $x_0\in\mathcal{X}$, we have $\mathbb{E}_{\delta_{x_0}}\left[N_{t+T}^2m(x_0,0,t+T)^{-2}\right]\leq g(x_0).$
Finally, we can fix $\delta$ such that, combining \eqref{eq:preuve_lln} and Theorem \ref{thm:weakl2}, for $t$ large enough, we have
\begin{align*}
\mathbb{E}_{\delta_{x_0}}\left[\left(\frac{\sum_{u\in V_{t+T}}\phi_{t,T}\left(x_1,\left(X_s^{u},s\leq t+T\right)\right)}{N_{t+T}}\right)^2\right]\leq \varepsilon.
\end{align*}
\end{proof}

Corollary \ref{cor:lln_hom} is a direct consequence of Proposition \ref{prop:conv_homogene} and Corollary \ref{cor:lln}. 
\section{Asymptotic behavior a time-inhomogeneous dynamic: application of ergodicity techniques}\label{sec:ex_inh}
In the study of population dynamics, time-inhomogeneity typically appears in fluctuating environment. This effect can be modeled by a division rate that changes over time. In this section, we show how our method via the ergodicity of the auxiliary process applies to such models.

We consider a size-structured cell population in a fluctuating environment: each cell grows exponentially at rate $a>0$ and division occurs at time $t$ at rate $B(t,x)= x\varphi(t)$, if $x$ is the size of the cell at time $t$. We assume that $\varphi:\mathbb{R}_+\rightarrow\mathbb{R}_+$ is continuous and that there exist $\varphi_1,\varphi_2>0$ such that for all $t\in\mathbb{R}_+$,
\begin{align*}
\varphi_1\leq \varphi(t)\leq \varphi_2.
\end{align*}
The choice $B(x) = x$ is classical in the study of growth-fragmentation equations \cite{mischler2016spectral}. The originality comes from the function $\varphi$ which models a changing environment. 

At division, the cell splits into two daughter cells of size $\theta x$ and $(1-\theta)x$, with $\theta\sim\mathcal{U}\left([\varepsilon,1-\varepsilon]\right)$ for some $0<\varepsilon<\frac{1}{2}$ and $x$ the size of the cell at division. Then, the process that we consider is a Piecewise Deterministic Markov Process (PDMP) on a tree with individual jump rate $B$ and transition density function $Q$ given by
\begin{align*}
Q(x,y)=\left\{\begin{array}{lc}
\frac{1}{(1-2\varepsilon)x}&\text{ if }\varepsilon x\leq y\leq (1-\varepsilon)x,\\
0 &\text{ otherwise. }
\end{array}\right.
\end{align*}

Let us first make some comments on the choice of the model. The function $\varphi$ is lower bounded to ensure that each cell effectively divides after some time. The upper bound is convenient for the calculations. An interesting example is $B(t,x)=x(\alpha+\beta\sin(t))$, with $\alpha-\beta>0$ for the modeling of the growth of a cell population in a periodic environment. Finally, we consider a uniform law on $[\varepsilon,1-\varepsilon]$ for the kernel at division but the next lemmas can easily be extend to a more general kernel.  

Following the same calculations as in \cite[Section 2.2]{marguet2016}, we have
\begin{align*}
m(x,s,t)=1+x\phi(s,t),\  \forall x\in\mathbb{R}_+.
\end{align*}
where
\begin{align*}
\phi(s,t)=\int_s^t \varphi(r)e^{a(r-s)}dr.
\end{align*} 
Moreover, in this case, the infinitesimal generator of the auxiliary process is given by
\begin{align}\label{eq:gene_ex}
\mathcal{A}_s^{(t)}f(x)=axf'(x)+2x\varphi(s)\int_{\varepsilon x}^{(1-\varepsilon)x}\left(f(y)-f(x)\right)\frac{m(y,s,t)}{m(x,s,t)}\frac{dy}{(1-2\varepsilon)x},
\end{align}
for all $f:\mathbb{R}_+\rightarrow\mathbb{R}$ continuously differentiable, all $s,t\in\mathbb{R}_+$ such that $s<t$ and all $x\in \mathbb{R}_+$.
Then, the division rate of the auxiliary process is given by
\begin{align*}
\widehat{B}_s^{(t)}(x)=2x\varphi(s)\frac{m(x/2,s,t)}{m(x,s,t)},
\end{align*}
and the transition kernel for the trait at birth is given by
\begin{align*}
\widehat{Q}_s^{(t)}(x,dy)=\widehat{Q}_s^{(t)}(x,y)dy=\frac{m(y,s,t)}{x(1-2\varepsilon)m(x/2,s,t)}\mathbf{1}_{\varepsilon x\leq y\leq (1-\varepsilon)x}dy,
\end{align*}
where $$m(x/2,s,t)=\int_{\varepsilon x}^{(1-\varepsilon)x}m(y,s,t)Q(x,y)dy,$$
corresponds to a normalization term so that $\widehat{Q}_s^{(t)}(x,dy)$ is a probability measure.
We have the following result on the asymptotic behavior of the measure-valued branching process.
\begin{thm}\label{th:llg_example}
Let $T>0$. For all bounded measurable functions $F:\mathbb{D}([0,T],\mathcal{X})\rightarrow\mathbb{R}$, for all $x_0,x_1\in\mathcal{X}$, we have
\begin{align}\label{eq:conv_lln}
\frac{\sum_{u\in V_{t+T}}F\left(X_{t+s}^{u},s\leq T\right)}{N_{t+T}}-\mathbb{E}_{x_1}\left[F\left(Y_{t+s}^{(t+T)},s\leq T\right)\right]\xrightarrow[t\rightarrow +\infty]{} 0\text{, in }\mathbb{L}_2(\delta_{x_0}).
\end{align}
\end{thm}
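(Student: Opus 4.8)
The plan is to apply the general machinery developed in Section~\ref{sec:asymp}---specifically Corollary~\ref{cor:lln}---to the concrete size-structured model, which reduces the proof of Theorem~\ref{th:llg_example} to verifying that Assumptions~\ref{assu:debut}, \ref{assu:regularite}, \ref{assu:doeblin}, \ref{assu:ergo}, \ref{assu:J2phiNt} and \ref{assu:Ntmt} all hold in this setting. Indeed, once these hypotheses are checked, Corollary~\ref{cor:lln} gives exactly the $\mathbb{L}_2(\delta_{x_0})$-convergence of $N_{t+T}^{-1}\sum_{u\in V_{t+T}}F(X_{t+s}^u,s\leq T)-P_{0,t,T}F(x_1)$ to zero, and by definition \eqref{eq:semigroup_traj} we have $P_{0,t,T}F(x_1)=\mathbb{E}_{x_1}[F(Y_{t+s}^{(t+T)},s\leq T)]$, which is precisely the statement \eqref{eq:conv_lln}.

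First I would record the explicit quantities available for this model: the bounds $\varphi_1\leq\varphi(t)\leq\varphi_2$, the closed form $m(x,s,t)=1+x\phi(s,t)$ with $\phi(s,t)=\int_s^t\varphi(r)e^{a(r-s)}dr$, and the generator \eqref{eq:gene_ex}. Assumption~\ref{assu:debut} is straightforward here: the division rate $B(t,x)=x\varphi(t)$ is linear in $x$ so $\gamma=1$ and $b_1=\varphi_2$ works, the kernel splits mass into $\theta x$ and $(1-\theta)x$ which sum to $x$, the deterministic exponential growth forces $\int_0^t B\to+\infty$, and the drift $\mathcal{G}h(x)=axh'(x)$ handles the last item with $h_{n,\gamma}\to|x|$. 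Assumption~\ref{assu:regularite} follows from the smoothness of $s\mapsto\phi(s,t)$, and Assumption~\ref{assu:doeblin} reduces to bounding $\int m(y,s,t)/m(x,s,t)\,Q(x,dy)=m(x/2,s,t)/m(x,s,t)$, which is at most $1$ uniformly because $m$ is increasing in its first argument and the split sizes lie below $x$.

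The substantive work lies in Assumptions~\ref{assu:ergo} and \ref{assu:J2phiNt}, as the introduction and the last paragraph before the theorem announce. For \ref{assu:ergo}.1 I would exhibit a Lyapunov function---a natural candidate is $V(x)=x$ or a small affine/power variant---and compute $\mathcal{A}_s^{(t)}V(x)$ using \eqref{eq:gene_ex}, showing the exponential growth term $axV'(x)$ is dominated by the negative contribution coming from fragmentation, which shrinks the trait by a factor bounded away from $1$ (since $\theta\in[\varepsilon,1-\varepsilon]$) at rate proportional to $x\varphi(s)\geq x\varphi_1$; the key is that the quadratic-in-$x$ division rate beats the linear drift, yielding $\mathcal{A}_s^{(t)}V\leq-cV+d$ for suitable $c,d>0$. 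For the minorization \ref{assu:ergo}.2 on the compact sublevel set $B(R,V)$, I would use that the deterministic flow together with a single fragmentation event over a time interval $[r,s]$ spreads mass with a density bounded below on a fixed region, giving $P_{r,s}^{(t)}(x,\cdot)\geq\alpha_{s-r}\nu_{r,s}$ uniformly in $t\geq s$; the uniformity in $t$ requires controlling the ratio $m(y,s,t)/m(x,s,t)$ appearing in the tilted kernel, which is again tractable from the explicit form of $m$. Assumption~\ref{assu:J2phiNt} is where I expect the main obstacle: I must show $c(x)=\liminf_t t^{-1}\log m(x,0,t)>0$ (immediate from $m(x,0,t)=1+x\phi(0,t)$ with $\phi(0,t)$ growing exponentially at rate $a$, so $c(x)=a>0$) and then bound the moment expression $\mathbb{E}_x[B(t,Y_t^{(t)})J((1\vee V)\varphi_t(x,\cdot))(Y_t^{(t)})]$ by $D_1e^{\alpha_1 t}$ with $\alpha_1<a$. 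This demands controlling $\varphi_t(x,y)=\sup_{s\geq t}m(x,0,t)m(y,t,s)/m(x,0,s)$, which from the explicit $m$ is polynomial in $y$ and bounded in $x$, and then bounding the moments of the auxiliary process $Y_t^{(t)}$ via the Lyapunov estimate \eqref{eq:assu1avecsemi}---the delicate point is matching the exponential growth rate of these moments against the threshold $a$. Finally, Assumption~\ref{assu:Ntmt}, controlling the second moment of $N_t/m(x,0,t)$, I would verify by a many-to-one-for-forks computation analogous to the one in the proof of Theorem~\ref{thm:weakl2}, again exploiting the explicit first-moment semigroup.
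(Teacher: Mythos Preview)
Your overall strategy---reduce Theorem~\ref{th:llg_example} to Corollary~\ref{cor:lln} by checking Assumptions~\ref{assu:debut}--\ref{assu:Ntmt}---is exactly what the paper does, and your treatment of Assumptions~\ref{assu:debut}, \ref{assu:regularite}, \ref{assu:doeblin} and the computation $c(x)=a$ is fine.

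There is, however, a genuine gap in your handling of Assumption~\ref{assu:ergo}. With the candidate $V(x)=x$ (or any affine or positive-power variant), the Lyapunov drift condition \ref{assu:ergo}.1 does hold, as you argue. But the sublevel set $B(R,V)=\{x:\,V(x)\leq R\}$ is then $(0,R]$, which is \emph{not} bounded away from $0$. The minorization condition \ref{assu:ergo}.2 then fails: for $x$ close to $0$ the division rate $\widehat B_s^{(t)}(x)\leq 2x\varphi_2$ is arbitrarily small and the deterministic flow keeps the trait near $0$, so the law $P_{r,s}^{(t)}(x,\cdot)$ concentrates near $0$ and no fixed measure $\nu_{r,s}$ with positive mass can sit below all of them. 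The paper resolves this by taking $V(x)=x+x^{-1}$, so that $B(R,V)=[x_1(R),x_2(R)]$ is a genuine compact interval with $x_1(R)>0$ (see \eqref{eq:root}); this is precisely the content of Remark~4.3 in the paper. A secondary consequence is that Assumption~\ref{assu:J2phiNt}.2 then involves $1\vee V(y)=y+y^{-1}$, so you must also control the first \emph{harmonic} moment $\mathbb E_x[(Y_t^{(t)})^{-1}]$, not just positive moments; the paper does this in Lemma~\ref{lemma:moment}.

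A smaller issue concerns your minorization argument via ``a single fragmentation event''. One jump from $x\in[x_1(R),x_2(R)]$ followed by drift lands the process in $[\varepsilon x e^{a(s-r)},(1-\varepsilon)x e^{a(s-r)}]$, an interval that still depends on $x$; for these intervals to share a common sub-interval one needs $(1-\varepsilon)/\varepsilon>x_2(R)/x_1(R)$, which is not guaranteed since $R>2d(\varepsilon)/a$ may be large. The paper instead iterates $n$ fragmentations with $n$ chosen so that $((1-\varepsilon)/\varepsilon)^{n-1}>x_2(R)/x_1(R)$ (see \eqref{eq:nnu}), which is what actually produces a common minorizing measure.
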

The proof of Theorem \ref{th:llg_example} is detailed in several lemmas. First, in Lemma \ref{lemma:Lyapunov_min}, we exhibit a Lyapunov function and a probability measure which ensure that Assumption \ref{assu:ergo} is satisfied. Next, in Lemma \ref{lemma:moment}, we prove that the moments of the auxiliary process are bounded. Finally, in Lemmas \ref{lemma:verif_J2Nt} and \ref{lemma:varWt}, we prove that Assumptions \ref{assu:J2phiNt} and \ref{assu:Ntmt} are satisfied. 

Let $V(x)=\frac{1}{x}+x$ for $x\in\mathbb{R}_+^{*}$. We recall that $B(R,V)=\left\lbrace x\in\mathbb{R}_+, \ V(x)<R\right\rbrace$.
\begin{lemma}\label{lemma:Lyapunov_min}We have the following:
\begin{enumerate}
\item There exists $d(\varepsilon)>0$ such that for all $0\leq s\leq t$ and $x\in\mathbb{R}_+^{*}$ we have
\begin{align*}
\mathcal{A}_s^{(t)}V(x)\leq -aV(x)+d(\varepsilon).
\end{align*}
\item For all $R>2d(\varepsilon)a^{-1}$, for all $r<s\leq t$, there exists $\alpha_{s-r}>0$ such that for all Borel set $A$ of $\mathbb{R}_+$,
\begin{align*}
\inf_{x\in B(R,V)}\mathbb{P}\left(Y_s^{(t)}\in A\big| Y_r^{(t)}=x\right)\geq \alpha_{s-r} \nu_{r,s}(A).
\end{align*}
\end{enumerate}
\end{lemma}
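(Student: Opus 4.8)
The plan is to treat the two assertions separately: the drift inequality is a direct (if delicate) computation with the explicit generator, while the minorization rests on the piecewise-deterministic structure of the auxiliary process.

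For the first point I would substitute $V(x)=x+1/x$ into \eqref{eq:gene_ex}. The transport part contributes $axV'(x)=ax-a/x$, so the only dangerous term is the linear growth $ax$; the whole point is that the fragmentation integral produces a term of order $-x^2$ that crushes it. The elementary but crucial estimate is that on the integration range $\varepsilon x\le y\le(1-\varepsilon)x$ one has, uniformly in $s\le t$,
\[
\varepsilon\le\frac{y}{x}\le\frac{m(y,s,t)}{m(x,s,t)}=\frac{1+y\phi(s,t)}{1+x\phi(s,t)}\le 1,
\]
the left inequality following by cross-multiplication (which reduces to $x\ge y$) and the right one because $y<x$. Splitting $V(y)-V(x)=(y-x)+(1/y-1/x)$, the linear piece is negative and, bounding the ratio below by $\varepsilon$, integrates to at most $-\varphi(s)\varepsilon x^2\le-\varphi_1\varepsilon x^2$; the convex piece $1/y-1/x\ge 0$, bounding the ratio above by $1$, integrates to an $x$-independent constant $K(\varepsilon)$. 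Collecting everything yields $\mathcal{A}_s^{(t)}V(x)\le -a/x-ax+\bigl(2ax-\varphi_1\varepsilon x^2+K(\varepsilon)\bigr)$; since $-a/x-ax=-aV(x)$ and the downward parabola $2ax-\varphi_1\varepsilon x^2$ is bounded above by $a^2/(\varphi_1\varepsilon)$, one reads off the claim with $d(\varepsilon)=a^2/(\varphi_1\varepsilon)+K(\varepsilon)$.

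For the minorization, observe first that $B(R,V)=\{x+1/x<R\}$ is a compact interval $[x_-,x_+]\subset(0,\infty)$ bounded away from $0$ and $\infty$. The auxiliary process is a PDMP: between jumps the trait follows the flow $\dot z=az$, and at a jump it is multiplied by a factor $\theta\in[\varepsilon,1-\varepsilon]$ whose conditional law has a density bounded above and below on $[\varepsilon,1-\varepsilon]$ (the kernel $\widehat Q_w^{(t)}$ is uniform up to the bounded factor $m(\cdot,w,t)/m(\cdot/2,w,t)$). Because the total growth factor over $[r,s]$ equals $e^{a(s-r)}$ irrespective of the jump times, after exactly $k$ jumps one has $Y_s^{(t)}=x\,e^{a(s-r)}\prod_{i=1}^k\theta_i$. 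I would therefore lower bound $\mathbb{P}(Y_s^{(t)}\in A\mid Y_r^{(t)}=x)$ by the probability of the event that exactly $k$ jumps occur in $[r,s]$ and the product $\prod_i\theta_i$ lands in a prescribed range. On the compact set every ingredient is uniformly controlled: the jump rate $\widehat B$ is bounded above and below, so the survival probabilities between consecutive jumps are bounded below, and the density of $\prod_i\theta_i$ (a convolution in logarithmic coordinates of densities each bounded below on an interval) is itself bounded below on the interior of its support. This exhibits an absolutely continuous component of the law of $Y_s^{(t)}$ with density bounded below on a fixed reference interval, uniformly over $x\in[x_-,x_+]$, which is exactly the asserted minorization with $\nu_{r,s}$ proportional to Lebesgue measure on that interval.

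The main obstacle is precisely the choice of $k$ and of the common reference interval. Since jumps can only shrink the trait (by a factor in $[\varepsilon,1-\varepsilon]$) while the flow only enlarges it by the fixed factor $e^{a(s-r)}$, a single jump gives reachable sets $[\varepsilon e^{a(s-r)}x,\,(1-\varepsilon)e^{a(s-r)}x]$ whose intersection over $x\in[x_-,x_+]$ is empty as soon as $x_+/x_-$ exceeds $(1-\varepsilon)/\varepsilon$, i.e. for large $R$. The remedy is to allow $k\sim\log(x_+/x_-)/\log\frac{1-\varepsilon}{\varepsilon}$ jumps, so that the intervals $[\varepsilon^k e^{a(s-r)}x,\,(1-\varepsilon)^k e^{a(s-r)}x]$ do overlap across the whole compact set; verifying that the resulting density remains bounded below uniformly, and that the corresponding $\alpha_{s-r}>0$ can be extracted, is the technical heart of the second point.
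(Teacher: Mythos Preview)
Your proposal is correct and follows essentially the same route as the paper. For the drift inequality you split $V$ into its linear and reciprocal parts and use the two-sided bound $\varepsilon\le m(y,s,t)/m(x,s,t)\le 1$ on the integration range, exactly as the paper does (the paper computes the linear piece a bit more precisely rather than using the crude lower bound $\varepsilon$, yielding a slightly sharper constant, but the structure is identical). For the minorization you condition on exactly $k$ jumps with $k$ chosen so that $((1-\varepsilon)/\varepsilon)^{k}>x_+/x_-$, bound the survival factors via the monotonicity of $\widehat B$ and the kernel density via $\widehat Q_s^{(t)}(x,y)\ge 2\varepsilon/((1-2\varepsilon)x)$, and exploit the identity $Y_s^{(t)}=x\,e^{a(s-r)}\prod_i\theta_i$; the paper carries out precisely this program, replacing your ``convolution in logarithmic coordinates'' heuristic by an explicit induction on the $k$-fold integral (introducing auxiliary constants $\delta_1<1<\delta_2$ to stay in the interior of the support, which is the point you flag at the end).
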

\begin{proof}
We first prove that $V(x)=\frac{1}{x}+x$ satisfies the first point of Lemma \ref{lemma:Lyapunov_min}. Let us compute $\mathcal{A}_s^{(t)}V_1(x)$ where $V_1(x)=x$. We have for $x\in\mathbb{R}_+$,
\begin{align*}
\mathcal{A}_s^{(t)}V_1(x)&=ax+\frac{2}{1-2\varepsilon}\varphi(s)\int_{\varepsilon x}^{(1-\varepsilon)x} (y-x)\frac{1+y\phi(s,t)}{1+x\phi(s,t)}dy\\
&=ax-\varphi(s)x^2+\frac{2}{3}\varphi(s)(\varepsilon^2-\varepsilon+1)x^2\left(1-\frac{1}{1+x\phi(s,t)}\right).
\end{align*}
Then, we obtain
\begin{align*}
\mathcal{A}_s^{(t)}V_1(x)\leq a\left(1-\frac{1}{3a}\varphi(s)(1+2\varepsilon-2\varepsilon^2)x\right)x\leq -ax+\frac{3a^2}{\varphi_1(1+2\varepsilon-2\varepsilon^2)}.
\end{align*}
Next, let $V_2(x)=\frac{1}{x}$. We have
\begin{align*}
\mathcal{A}_s^{(t)}V_2(x)&=-\frac{a}{x}+\frac{2}{1-2\varepsilon}\varphi(s)\int_{\varepsilon x}^{(1-\varepsilon)x}\left(\frac{1}{y}-\frac{1}{x}\right)\frac{1+y\phi(s,t)}{1+x\phi(s,t)}dy.
\end{align*}
Using that for all $x\geq 0$ and $y\in[\varepsilon x,(1-\varepsilon)x]$, $1+y\phi(s,t)\leq 1+x\phi(s,t)$, we get
\begin{align*}
\mathcal{A}_s^{(t)}V_2(x)&\leq -\frac{a}{x}+2\varphi(s)C(\varepsilon),
\end{align*}
where $C(\varepsilon)=\frac{1}{1-2\varepsilon}\left[\log\left(\frac{1-\varepsilon}{\varepsilon}\right)-(1-2\varepsilon)\right]$.  
Noticing that $C(\varepsilon)>0$ because $\varepsilon<\frac{1}{2}$ yields
\begin{align}\label{eq:V2}
\mathcal{A}_s^{(t)}V_2(x)\leq -aV_2(x)+2\varphi_2C(\varepsilon).
\end{align}
Finally
\begin{align*}
\mathcal{A}_s^{(t)}V(x)\leq -aV(x)+d(\varepsilon),
\end{align*}
where 
\begin{align*}
d(\varepsilon)=2\varphi_2C(\varepsilon)+\frac{3a^2}{\varphi_1(1+2\varepsilon-2\varepsilon^2)}.
\end{align*}
Next, we prove the second point of Lemma \ref{lemma:Lyapunov_min}. Let us describe the shape of the subset $B(R,V)$ of $\mathbb{R}_+$ that we will consider. For all $R>2d(\varepsilon)a^{-1}$, we have
\begin{align}\label{eq:root}
B(R,V)=\left\lbrace x\in\mathbb{R}_+, \ V(x)<R\right\rbrace=\left\lbrace x_1(R)<x<x_2(R)\right\rbrace,
\end{align}
where
\begin{align*}
x_1(R)=\frac{R-\sqrt{R^2-4}}{2},\ x_2(R)=\frac{R+\sqrt{R^2-4}}{2}.
\end{align*}
Now, we prove the second point.  
Let $R>2d(\varepsilon)a^{-1}$, $x\in B(R,V)$ and let $A$ be a Borel set. Let $n\in\mathbb{N}$ be such that
\begin{align}\label{eq:nnu}
\left(\frac{1-\varepsilon}{\varepsilon}\right)^{n-1} >\frac{x_2(R)}{x_1(R)}.
\end{align} Let $0\leq r<s \leq t$. Considering the case where the auxiliary process jumped exactly $n$ times between $r$ and $s$, we have
\begin{align*}
P_{r,s}^{(t)}\left(x,A\right)\geq \mathbb{E}\left[\mathbf{1}_{\left\{Y_s^{(t)}\in A\right\}}\mathbf{1}_{\left\{r\leq\tau_1\leq s\right\}}\mathbf{1}_{\left\{\tau_1\leq\tau_2\leq s\right\}}\ldots\mathbf{1}_{\left\{\tau_{n-1}\leq\tau_n\leq s\right\}}\mathbf{1}_{\left\{\tau_{n+1}\geq s\right\}}\middle| Y_r^{(t)}=x\right],
\end{align*}
where $\tau_i$ denotes the time of the $i$th jump of the auxiliary process, $i=1,\ldots,n$. Let us denote by $\mathcal{F}_s^{(t)}$ the filtration generated by the auxiliary process $(Y_s^{(t)},s\leq t)$ up to time $s$. Conditioning with respect to $\mathcal{F}_{\tau_1}^{(t)}$ and using the strong Markov property and the fact that between two jumps, the growth of the auxiliary process is exponential at rate $a$, we get
\begin{align*}
P_{r,s}^{(t)}\left(x,A\right)\geq\mathbb{E}\Bigg[\mathbf{1}_{\left\{r\leq\tau_1\leq s\right\}}\int_{J_{r,\tau_1}(x)}\mathbb{E}\Bigg[\mathbf{1}_{\left\{\tau_1\leq\tau_2\leq s\right\}}\ldots\mathbf{1}_{\left\{\tau_{n-1}\leq\tau_n\leq s\right\}}&\mathbf{1}_{\left\{ \tau_{n+1}\geq s\right\}}\mathbf{1}_{\left\{Y_s^{(t)}\in A\right\}}\Bigg|Y_{\tau_1}^{(t)}=y_1\Bigg]\\
&\times\widehat{Q}_{\tau_1}^{(t)}\left(xe^{a(\tau_1-r)},y_1\right)dy_1\Bigg| Y_r^{(t)}=x\Bigg].
\end{align*}
where for all $r\leq s\leq t$ and $x\in\mathcal{X}$, we set $J_{r,s}(x)=\left[\varepsilon xe^{a(s-r)};(1-\varepsilon)xe^{a(s-r)}\right]$. Introducing the probability density function of the first division time $\tau_1$ yields
\begin{align*}
P_{r,s}^{(t)}\left(x,A\right)\geq \int_r^s g_{r}^{(t)}(x,t_1)\int_{J_{r,t_1}(x)}\mathbb{E}\Bigg[\mathbf{1}_{\left\{t_1\leq\tau_2\leq s\right\}}\ldots\mathbf{1}_{\left\{\tau_{n-1}\leq\tau_n\leq s\right\}}&\mathbf{1}_{\left\{ \tau_{n+1}\geq s\right\}}\mathbf{1}_{\left\{Y_s^{(t)}\in A\right\}}\Bigg|Y_{t_1}^{(t)}=y_1\Bigg]\\
&\times\widehat{Q}_{t_1}^{(t)}\left(xe^{a(t_1-r)},y_1\right)dy_1,
\end{align*}
where for all $r\leq s\leq t$ and $x\in\mathcal{X}$, $$g_{r}^{(t)}(x,s)=\widehat{B}_{s}^{(t)}\left(xe^{a(s-r)}\right)\exp\left(-\int_r^{s} \widehat{B}_{u}^{(t)}\left(xe^{a(u-r)}\right)du\right).$$
Using the same argument iteratively, we get
\begin{align*}
P_{r,s}^{(t)}\left(x,A\right)\geq \int_{E_0} g_{r}^{(t)}(x,t_1)&\int_{E_1} g_{t_1}^{(t)}(y_1,t_2)\ldots\int_{E_{n-1}} g_{t_{n-1}}^{(t)}(y_{n-1},t_n)e^{-\int_{t_n}^{s} \widehat{B}_{u}^{(t)}\left(y_ne^{a(u-t_n)}\right)du}\\
&\times\mathbf{1}_{\left\{y_ne^{a(s-t_n)}\in A\right\}}\prod_{i=0}^{n-1}\widehat{Q}_{t_{i+1}}^{(t)}\left(y_{i}e^{a(t_{i+1}-t_i)},dy_{i+1}\right)dt_n\times\ldots\times dt_1,
\end{align*}	
where $y_0=x$ and $t_0=r$ and $E_i = [t_i,s]\times J_{t_{i},t_{i+1}}(y_{i})$, for $i=0,\ldots, n-1$. 
Next, since $x\mapsto\widehat{B}_s^{(t)}(x)$ is increasing, we have
\begin{align*}
\prod_{i=0}^n\exp\left(-\int_{t_i}^{t_{i+1}} \widehat{B}_{u}^{(t)}\left(y_ie^{a(u-t_i)}\right)du\right)&\geq \exp\left(-\int_r^s\widehat{B}_u^{(t)}\left(xe^{a(u-r)}\right)du\right)\geq e^{-2\varphi_2a^{-1}x_2(R)\left(e^{a(s-r)}-1\right)},
\end{align*}
where $t_{n+1}=s$. 
Noticing that
\begin{align*}
\widehat{B}_s^{(t)}(x)\geq x\varphi_1, \ \widehat{Q}_s^{(t)}(x,y)\geq \frac{2\varepsilon}{x(1-2\varepsilon)},
\end{align*}
yields
\begin{align*}
P_{r,s}^{(t)}\left(x,A\right)\geq & C_{r,s}\int_{\mathcal{E}_{n-2}}\left(\int_{t_{n-1}}^s\left(\int_{J_{t_{n-1},t_n}(y_{n-1})}\mathbf{1}_{\left\{y_ne^{a(s-t_n)}\in A\right\}}dy_n\right)dt_n\right)dy_{n-1}dt_{n-1}\ldots dy_1dt_1,
\end{align*}	
where $\mathcal{E}_{n-2}=E_0\times \ldots\times E_{n-2}$ and
\begin{align*}
C_{r,s}=\exp\left(-2\varphi_2\frac{x_2(R)}{a}\left(e^{a(s-r)}-1\right)\right)\left(\frac{2\varphi_1\varepsilon}{1-2\varepsilon}\right)^n.
\end{align*}
Applying the substitution $z = y_ne^{a(s-t_n)}$, we get
\begin{align*}
P_{r,s}^{(t)}\left(x,A\right)\geq & C_{r,s}I^{(n)}_{r,s}(x,A), 
\end{align*}	
where
\begin{align*}
I^{(n)}_{r,s}(x,A)=\frac{1}{a}\int_{\mathcal{E}_{n-2}}\left(1-e^{-a(s-t_{n-1})}\right)\left(\int_{J_{t_{n-1},s}(y_{n-1})}\mathbf{1}_{\left\{ z\in A\right\}}dz\right)dy_{n-1}dt_{n-1}\ldots dy_1dt_1.
\end{align*}
%
Let $0<\delta_1<1<\delta_2$ be such that 
\begin{align}\label{eq:delta}
\left(\frac{\delta_1}{\delta_2}\right)^{n-1}\geq \frac{\varepsilon}{1-\varepsilon}.
\end{align}
We prove the following proposition by induction for $1\leq k\leq n$: there exists $C>0$ depending on $\varepsilon$, $k$, $\delta_1$, $\delta_2$ and $a$ such that
\begin{align*}
 I^{(k)}_{r,s}(x,A)\geq C\left(1-e^{-a(s-r)}\right)^kx^{k-1} \int_{\delta_2^{k-1}\varepsilon^n x e^{a(s-r)}}^{\delta_1^{k-1}(1-\varepsilon)^k xe^{a(s-r)}}\mathbf{1}_{z\in A}dz.
\end{align*}
The verification for $k=1$ is straightforward. We assume now that the proposition is satisfied for $k-1$, for some $k\in\llbracket 2, n\rrbracket$. Then, there exists $C>0$ such that
\begin{align*}
I^{(k)}_{r,s}(x,A)&=\int_r^s\int_{J_{r,t_1}(x)}I_{t_1,s}^{(k-1)}(y_1,A)dy_1dt_1\\
&\geq C\int_r^s\int_{J_{r,t_1}(x)}y_1^{k-2}\left(1-e^{-a(s-t_1)}\right)^{k-1}\int_{\delta_2^{k-2}\varepsilon^{k-1}y_1e^{a(s-t_1)}}^{\delta_1^{k-2}(1-\varepsilon)^{k-1}y_1e^{a(s-t_1)}}\mathbf{1}_{\left\{z\in A\right\}}dzdy_1dt_1.
\end{align*}
Switching the integrals and using that $y_1>\varepsilon xe^{a(t_1-r)}$, we get
\begin{align*}
I^{(k)}_{r,s}(x,A)
&\geq C\int_r^s\left(1-e^{-a(s-t_1)}\right)^{k-1}x^{k-2}e^{a(k-2)(t_1-r)}\left(I_1+I_2+I_3\right)dt_1,
\end{align*}
where
\begin{align*}
I_1 &= \int_{\delta_2^{k-2}\varepsilon^{k}xe^{a(s-r)}}^{\delta_2^{k-2}(1-\varepsilon)\varepsilon^{k-1}xe^{a(s-r)}}\mathbf{1}_{\left\{z\in A\right\}}dz\left(\frac{z}{\delta_2^{k-2}\varepsilon^{k-1}}e^{-a(s-t_1)}-\varepsilon xe^{a(t_1-r)}\right),\\
I_2 &= \int_{\delta_2^{k-2}(1-\varepsilon)\varepsilon^{k-1}xe^{a(s-r)}}^{\delta_1^{k-2}(1-\varepsilon)^{k-1}\varepsilon xe^{a(s-r)}}\mathbf{1}_{\left\{z\in A\right\}}dz(1-2\varepsilon) xe^{a(t_1-r)},\\
I_3 &= \int_{\delta_1^{k-2}(1-\varepsilon)^{k-1}\varepsilon xe^{a(s-r)}}^{\delta_1^{k-2}(1-\varepsilon)^{k}xe^{a(s-r)}}\mathbf{1}_{\left\{z\in A\right\}}dz\left((1-\varepsilon)xe^{a(t_1-r)}-\frac{z}{\delta_1^{k-2}(1-\varepsilon)^{k-1}}e^{-a(s-t_1)}\right). 
\end{align*}
Next, reducing the intervals of integration for $I_1$ and $I_3$ and using that $\delta_2\varepsilon\leq(1-\varepsilon)$ and $\delta_1(1-\varepsilon)\geq \varepsilon$ according to \eqref{eq:delta}, we obtain,
\begin{align*}
&I_1\geq \int_{\delta_2^{k-1}\varepsilon^{k}xe^{a(s-r)}}^{\delta_2^{k-2}(1-\varepsilon)\varepsilon^{k-1}xe^{a(s-r)}}\mathbf{1}_{\left\{z\in A\right\}}dz(\delta_2-1)\varepsilon xe^{a(t_1-r)},\\
&I_3 \geq \int_{\delta_1^{k-2}(1-\varepsilon)^{k-1}\varepsilon xe^{a(s-r)}}^{\delta_1^{k-1}(1-\varepsilon)^{k}xe^{a(s-r)}}\mathbf{1}_{\left\{z\in A\right\}}dz(1-\delta_1)(1-\varepsilon)xe^{a(t_1-r)}. 
\end{align*}
Therefore, gathering the three integrals and integrating with respect to $t_1$, we get
\begin{align*}
I_{r,s}^{(k)}(x,A)\geq C\left(1-e^{-a(s-r)}\right)^kx^{k-1}\int_{\delta_2^{k-1}\varepsilon^{k}xe^{a(s-r)}}^{\delta_1^{k-1}(1-\varepsilon)^{k}xe^{a(s-r)}}\mathbf{1}_{\left\{z\in A\right\}}dz,
\end{align*}
where the constant $C$ varies from line to line and the proposition holds at stage $k$.
Finally, we have
\begin{align*}
P_{r,s}^{(t)}(x,A)\geq \alpha_{s-r}\nu_{r,s}(A),
\end{align*}
where
\begin{align*}
\alpha_{s-r}=&C\left(1-e^{-a(s-r)}\right)^nx_1(R)^{n-1}e^{a(s-r)}\left(\delta_1^{n-1}(1-\varepsilon)^nx_1(R)-\delta_2^{n-1}\varepsilon^nx_2(R)\right)C_{r,s},\\
\nu_{r,s}(A)=&\frac{1}{e^{a(s-r)}\left(\delta_1^{n-1}(1-\varepsilon)^nx_1(R)-\delta_2^{n-1}\varepsilon^nx_2(R)\right)}\int_{\delta_2^{n-1}\varepsilon^{n}xe^{a(s-r)}}^{\delta_1^{n-1}(1-\varepsilon)^{n}xe^{a(s-r)}}\mathbf{1}_{\left\{z\in A\right\}}dz,
\end{align*}
and
\begin{align*}
\delta_1^{n-1}(1-\varepsilon)^nx_1(R)-\delta_2^{n-1}\varepsilon^nx_2(R)>0,
\end{align*}
according to \eqref{eq:nnu} and \eqref{eq:delta}.
\end{proof}
Next, we check that Assumption \ref{assu:J2phiNt} is satisfied. The verification of the first point is straightforward as
$$\frac{\log(m(x,0,t))}{t}=\frac{\log(1+x\int_0^t \varphi(r)e^{ar}dr)}{t}\xrightarrow[t\rightarrow +\infty]{}a.$$
To check the second point of Assumption \ref{assu:J2phiNt}, we prove that the moments of the auxiliary process are bounded. For all $p\in\mathbb{N}^*$, $0\leq s\leq t$ and $x\geq 0$, we denote by $$f_{p}^{(t)}(x,s)=\E_x\left[\left(Y_{s}^{(t)}\right)^p\right].$$
\begin{lemma}\label{lemma:moment}
For all $p\in\mathbb{N}^*\bigcup \left\lbrace -1\right\rbrace$ and $x\geq 0$, we have
\begin{align*}
\sup_{t\geq 0}\sup_{s\leq t}\E_x\left[\left(Y_{s}^{(t)}\right)^p\right]<+\infty.
\end{align*}
\end{lemma}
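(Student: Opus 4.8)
The cases $p=1$ and $p=-1$ are already contained in the proof of Lemma \ref{lemma:Lyapunov_min}, where the drift inequalities $\mathcal{A}_s^{(t)}V_1(x)\leq -aV_1(x)+c_1$ and $\mathcal{A}_s^{(t)}V_2(x)\leq -aV_2(x)+c_2$ were obtained for $V_1(x)=x$ and $V_2(x)=1/x$, uniformly in $s\leq t$; together with a Dynkin/Gronwall argument (as for \eqref{eq:V2}) they give the corresponding moment bounds. For the remaining exponents $p\geq 2$, the plan is to prove a Foster--Lyapunov drift inequality of the same type for $V_p(x)=x^p$, namely that there exist constants $c_p>0$ and $d_p\geq 0$, independent of $s$ and $t$, such that $\mathcal{A}_s^{(t)}V_p(x)\leq -c_p V_p(x)+d_p$ for all $0\leq s\leq t$ and $x>0$. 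Once this is established, the conclusion follows exactly as for \eqref{eq:assu1avecsemi}: applying Dynkin's formula to $s\mapsto e^{c_p s}V_p(Y_s^{(t)})$, after the usual localization by stopping times to accommodate the polynomial growth of $V_p$ and passage to the limit by monotone convergence, yields $\E_x[(Y_s^{(t)})^p]\leq V_p(x)+d_p/c_p$ for all $s\leq t$ and all $t$, which is the claimed uniform bound.

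To establish the drift inequality for $p\geq 2$, I would insert $f=V_p$ and $m(x,s,t)=1+x\phi(s,t)$ into \eqref{eq:gene_ex} and compute
\[
\mathcal{A}_s^{(t)}V_p(x)=ap\,x^p+\frac{2\varphi(s)}{1-2\varepsilon}\int_{\varepsilon x}^{(1-\varepsilon)x}(y^p-x^p)\frac{1+y\phi(s,t)}{1+x\phi(s,t)}\,dy.
\]
On the interval of integration one has $y\leq(1-\varepsilon)x<x$, so $y^p-x^p\leq 0$, while the ratio $(1+y\phi(s,t))(1+x\phi(s,t))^{-1}$ is bounded below by $\varepsilon$ uniformly in $y\in[\varepsilon x,(1-\varepsilon)x]$ and in $s\leq t$, since $y\geq\varepsilon x$ gives $1+y\phi(s,t)\geq 1+\varepsilon x\phi(s,t)\geq\varepsilon(1+x\phi(s,t))$. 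Replacing the ratio by this lower bound and then $\varphi(s)$ by $\varphi_1$ (both steps upper bounds because the integrand is nonpositive) gives
\[
\mathcal{A}_s^{(t)}V_p(x)\leq ap\,x^p+\frac{2\varphi_1\varepsilon}{1-2\varepsilon}\,\kappa_p\,x^{p+1},\qquad \kappa_p:=\frac{(1-\varepsilon)^{p+1}-\varepsilon^{p+1}}{p+1}-(1-2\varepsilon).
\]

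The heart of the argument is to check that $\kappa_p<0$, so that the jump term is of order $-x^{p+1}$ and dominates the transport term $ap\,x^p$ for large $x$. This I would obtain from the mean value theorem applied to $u\mapsto u^{p+1}$ on $[\varepsilon,1-\varepsilon]$: there is $\xi\in(\varepsilon,1-\varepsilon)$ with $(1-\varepsilon)^{p+1}-\varepsilon^{p+1}=(p+1)\xi^p(1-2\varepsilon)$, and since $\xi<1-\varepsilon<1$ we get $(1-\varepsilon)^{p+1}-\varepsilon^{p+1}<(p+1)(1-2\varepsilon)$, that is $\kappa_p<0$. Setting $C_p:=2\varphi_1\varepsilon|\kappa_p|(1-2\varepsilon)^{-1}>0$, the bound reads $\mathcal{A}_s^{(t)}V_p(x)\leq ap\,x^p-C_p x^{p+1}$; for any fixed $c_p>0$ the polynomial $x\mapsto(ap+c_p)x^p-C_p x^{p+1}$ tends to $-\infty$ and is therefore bounded above by some $d_p$, which yields $\mathcal{A}_s^{(t)}V_p(x)\leq -c_p x^p+d_p$ and completes the drift inequality.

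I expect the main obstacle to be precisely this sign verification for $\kappa_p$, i.e.\ showing uniformly in $p$ that the negative jump contribution outweighs the positive transport term, together with the care needed to justify Dynkin's formula for the unbounded test functions $V_p$ through localization. Once the drift inequality is in hand, the uniform moment bound over $s\leq t$ and over $t$ is immediate.
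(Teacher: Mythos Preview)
Your argument is correct and rests on the same computation as the paper: plug $V_p(x)=x^p$ into the generator \eqref{eq:gene_ex}, use the uniform lower bound $\varepsilon$ for the weight $m(y,s,t)/m(x,s,t)$ on $[\varepsilon x,(1-\varepsilon)x]$, and integrate to obtain a term of order $-x^{p+1}$ that dominates $ap\,x^p$. Your mean-value-theorem justification of $\kappa_p<0$ is fine and in fact slightly more explicit than the paper's one-line remark that $C(\varepsilon)>0$ ``because $\varepsilon<\tfrac12$''.

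The only real difference is in how the bound on $\mathcal{A}_s^{(t)}V_p$ is turned into a moment bound. You stay at the pointwise level and close the inequality $ap\,x^p-C_p x^{p+1}\leq -c_p x^p+d_p$ directly, then apply Dynkin--Gronwall exactly as in \eqref{eq:assu1avecsemi}. The paper instead takes expectations first, obtaining $\partial_s f_p^{(t)}\leq ap\,f_p^{(t)}-C(\varepsilon)\,f_{p+1}^{(t)}$, and then invokes Jensen's inequality $f_{p+1}^{(t)}\geq (f_p^{(t)})^{1+1/p}$ to get a closed differential inequality $\partial_s f_p^{(t)}\leq F(f_p^{(t)})$ with $F(u)=ap\,u-C(\varepsilon)u^{1+1/p}$, whose sign change forces $f_p^{(t)}$ to stay below $f_p^{(t)}(x,0)\vee x_0$. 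Your route is a touch more direct (no Jensen step), and it uniformizes the treatment with the $p=1$ and $p=-1$ cases already handled by Foster--Lyapunov drift in Lemma~\ref{lemma:Lyapunov_min}; the paper's route, on the other hand, avoids the localization issue for the unbounded test functions $V_p$ by working from the outset with the scalar quantity $f_p^{(t)}(x,s)$. Either way the conclusion is the same.
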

\begin{rem}
The moments that we need to control in order to check the second point of Assumption \ref{assu:J2phiNt} depend on the function $V$. The shape of the Lyapunov function $V(x)=x+x^{-1}$ was convenient for the proof of the second point of Lemma \ref{lemma:Lyapunov_min}. Indeed, the proof relies on the fact that $B(R,V)$ is lower bounded by a positive real number. This is the case because of the term $x^{-1}$ in $V$. Because of this term, we need to control the first harmonic moment of the auxiliary process.
\end{rem}
\begin{proof}
Let $p\in\mathbb{N}^*$ be a positive integer. We have, using \eqref{eq:gene_ex} and Dynkin's formula,
\begin{align*}
f_p^{(t)}(x,s)=& x^p+ap\int_0^{s} f_p^{(t)}(x,r)dr\\
&+2\int_0^{s} \E_x\left[\varphi(r)\int_{\varepsilon Y_r^{(t)}}^{(1-\varepsilon)Y_r^{(t)}}\left(y^p-\left(Y_{r}^{(t)}\right)^p\right)\frac{1+y\phi(r,t)}{1+Y_r^{(t)}\phi(r,t)}dy\right]\frac{dr}{1-2\varepsilon}.
\end{align*}
By differentiation with respect to $s$ of the last equality we get
\begin{align*}
\partial_s f_{p}^{(t)}(x,s)= apf_{p}^{(t)}(x,s)+2\E_x\left[\varphi(s)\int_{\varepsilon Y_s^{(t)}}^{(1-\varepsilon)Y_s^{(t)}}\left(y^p-\left(Y_{s}^{(t)}\right)^p\right)\frac{1+y\phi(s,t)}{1+Y_s^{(t)}\phi(s,t)}\frac{dy}{1-2\varepsilon}\right].
\end{align*}
Next, we notice that for $\varepsilon x\leq y\leq (1-\varepsilon)x$, we have
\begin{align*}
\frac{m(y,s,t)}{m(x,s,t)}\geq \frac{1+\varepsilon x\phi(s,t)}{1+x\phi(s,t)}\geq \varepsilon.
\end{align*}
Then
\begin{align*}
\partial_s f_{p}^{(t)}(x,s)&\leq apf_{p}^{(t)}(x,s)+2\varepsilon\E_x\left[\varphi(s)\int_{\varepsilon Y_s^{(t)}}^{(1-\varepsilon)Y_s^{(t)}}\left(y^p-\left(Y_{s}^{(t)}\right)^p\right)\frac{dy}{1-2\varepsilon}\right]\\
&\leq apf_{p}^{(t)}(x,s)-C(\varepsilon)f_{p+1}^{(t)}(x,s),
\end{align*}
where $C(\varepsilon):=\frac{2\varepsilon}{1-2\varepsilon}\varphi_1\left(1-2\varepsilon-\frac{(1-\varepsilon)^{p+1}-\varepsilon^{p+1}}{p+1}\right)$. Moreover, $C(\varepsilon)>0$ because $\varepsilon<\frac{1}{2}$. Applying Jensen inequality, we have $f_{p+1}^{(t)}(s)\geq f_{p}^{(t)}(s)^{1+1/p}$. Finally, we obtain the following differential inequality:
\begin{align*}
\partial_s f_{p}^{(t)}(x,s)\leq& F\left(f_{p}^{(t)}(x,s)\right),
\end{align*}
where $F(x)=apx-C(\varepsilon)x^{1+1/p}$ for all $x\geq 0$. We notice that there exists $x_0>0$ such that $F> 0$ on $(0,x_0)$ and $F< 0$ on $(x_0,+\infty)$. Then, any solution to the equation $y'=F(y)$ is bounded by $y(0)\vee x_0$ and so is $f_{p}^{(t)}(x,\cdot)$.

Next, we prove that the first harmonic moment of the auxiliary process is bounded. 
Let us recall that $V_2(x)=1/x$. Let $x\in\mathcal{X}$ and $0\leq s\leq t$.  According to Kolmogorov's forward equation, we have
\begin{align*}
\partial_s P_{0,s}^{(t)}V_2(x)=P_{0,s}^{(t)}\mathcal{A}_s^{(t)}V_2(x).
\end{align*}
Using \eqref{eq:V2}, we get
\begin{align*}
\partial_sP_{0,s}^{(t)}V_2(x)\leq -aP_{0,s}^{(t)}V_2(x)+2\varphi_2 C(\varepsilon).
\end{align*}
Finally, using Gr\"onwall's inequality, we obtain
\begin{align*}
P_{0,s}^{(t)}V_2(x)\leq \left(\frac{1}{x}-\frac{2\varphi_2 C(\varepsilon)}{a}\right)e^{-as}+\frac{2\varphi_2 C(\varepsilon)}{a}.
\end{align*}
\end{proof}
\begin{lemma}\label{lemma:verif_J2Nt}
For all $x\geq 0$, we have
\begin{align*}
\sup_{t\geq 0}\mathbb{E}_x\left[B\left(t,Y_t^{(t)}\right)J\left(1\vee V(\cdot)\varphi_t(x,\cdot)\right)\left(Y_t^{(t)}\right)\right]<+\infty.
\end{align*}
\end{lemma}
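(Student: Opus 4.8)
The plan is to verify Assumption \ref{assu:J2phiNt}.2 in its strongest form by reducing the quantity inside the supremum to the expectation of a polynomial in $Y_t^{(t)}$ and $(Y_t^{(t)})^{-1}$ whose coefficients do not depend on $t$, and then to invoke Lemma \ref{lemma:moment}. The only source of $t$-dependence that could obstruct a uniform bound is $\varphi_t(x,\cdot)$, so the first task is to control this quantity uniformly in $t$. I would start from the additivity relation $\phi(0,u)=\phi(0,t)+e^{at}\phi(t,u)$ for $0\leq t\leq u$, which follows at once from the definition of $\phi$, so that $m(x,0,u)=m(x,0,t)+xe^{at}\phi(t,u)$. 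Splitting the fraction in \eqref{eq:defphi} then gives, for every $u\geq t$,
\[
\frac{m(x,0,t)m(y,t,u)}{m(x,0,u)}=\frac{m(x,0,t)\left(1+y\phi(t,u)\right)}{m(x,0,t)+xe^{at}\phi(t,u)}\leq m(x,0,t)\left(\frac{1}{m(x,0,t)}+\frac{y}{xe^{at}}\right)=1+\frac{m(x,0,t)}{xe^{at}}\,y.
\]
Taking the supremum over $u\geq t$ yields $\varphi_t(x,y)\leq 1+K_t\,y$ with $K_t=m(x,0,t)(xe^{at})^{-1}$. Since $\phi(0,t)e^{-at}=\int_0^t\varphi(r)e^{-a(t-r)}dr\leq \varphi_2/a$ and $(xe^{at})^{-1}\leq 1/x$, the coefficient is bounded uniformly in $t$ by $\bar K(x):=x^{-1}+\varphi_2 a^{-1}$, hence $\varphi_t(x,y)\leq 1+\bar K(x)y$ for all $t\geq 0$. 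As $V(y)=y+y^{-1}\geq 2>1$ and $\varphi_t(x,y)\geq 1$ (take $u=t$ in \eqref{eq:defphi}), the two possible readings of $(1\vee V(\cdot))\varphi_t(x,\cdot)$ coincide and equal $V(\cdot)\varphi_t(x,\cdot)$.

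Next I would exploit the explicit division kernel: the offspring of an individual of size $z$ have sizes $\theta z$ and $(1-\theta)z$ with $\theta\sim\mathcal{U}([\varepsilon,1-\varepsilon])$, so that with $g=V\varphi_t(x,\cdot)$,
\[
Jg(z)=\frac{2}{1-2\varepsilon}\int_\varepsilon^{1-\varepsilon}g(\theta z)\,g((1-\theta)z)\,d\theta.
\]
For $\theta\in[\varepsilon,1-\varepsilon]$ I would bound $V(\theta z)\leq z+(\varepsilon z)^{-1}$ and $\varphi_t(x,\theta z)\leq 1+\bar K(x)z$ (and likewise for $1-\theta$), which gives $g(\theta z)\leq p(z):=\bar K(x)z^2+z+\bar K(x)\varepsilon^{-1}+(\varepsilon z)^{-1}$, a polynomial in $z$ and $z^{-1}$ with powers in $\{-1,0,1,2\}$ and coefficients independent of $t$ and $\theta$. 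Hence $Jg(z)\leq 2p(z)^2$, whose powers lie in $\{-2,-1,0,1,2,3,4\}$, and multiplying by $B(t,z)=z\varphi(t)\leq\varphi_2 z$ produces $B(t,z)Jg(z)\leq 2\varphi_2\,z\,p(z)^2$, a polynomial in $z,z^{-1}$ whose powers all lie in $\{-1,0,1,2,3,4,5\}$.

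Finally, evaluating at $z=Y_t^{(t)}$ and taking $\E_x[\cdot]$, the bound becomes a finite linear combination of the moments $\E_x[(Y_t^{(t)})^p]$ for $p\in\{-1,0,1,2,3,4,5\}$, with coefficients depending only on $x,\varepsilon,a,\varphi_2$. By Lemma \ref{lemma:moment} each of these moments, in particular the harmonic moment $p=-1$, is bounded uniformly in $t$, and the claim follows. One even obtains a constant bound, so that Assumption \ref{assu:J2phiNt}.2 holds with $\alpha_1=0<a=c(x)$.

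The one point to watch, and the only genuine obstacle, is the negative powers. The product $p(z)^2$ contains a $z^{-2}$ term, which is \emph{not} covered by Lemma \ref{lemma:moment}; it is precisely the extra factor $z$ supplied by the division rate $B(t,z)=z\varphi(t)$ that lowers this worst term to $z^{-1}$, the most negative moment the lemma controls. In other words, the linear growth of $B$ in $z$ is exactly what makes the harmonic part $x^{-1}$ of the Lyapunov function $V$ admissible here, as already flagged in the remark following Lemma \ref{lemma:moment}; the rest of the argument is a routine bookkeeping of polynomial powers.
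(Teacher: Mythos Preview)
Your proof is correct and follows essentially the same strategy as the paper: bound $\varphi_t(x,\cdot)$ uniformly in $t$ by an affine function of $y$, combine with $V$ and the explicit kernel to reduce $B(t,z)J(V\varphi_t(x,\cdot))(z)$ to a Laurent polynomial in $z$ with powers in $\{-1,\dots,5\}$, and conclude via Lemma~\ref{lemma:moment}. The only noteworthy difference is your derivation of the bound on $\varphi_t$: you exploit the exact additivity $\phi(0,u)=\phi(0,t)+e^{at}\phi(t,u)$ to get the clean inequality $\varphi_t(x,y)\leq 1+\bar K(x)y$, whereas the paper simply sandwiches $m(x,s,t)$ between $1+\tfrac{x}{a}\varphi_1(e^{a(t-s)}-1)$ and $1+\tfrac{x}{a}\varphi_2(e^{a(t-s)}-1)$ and arrives at $\varphi_t(x,y)\leq A_1(x)^{1/2}(1+\tfrac{y}{a}\varphi_2)$; both routes produce the same range of powers after squaring and multiplying by $z$, and both hinge on the same observation you highlight at the end, namely that the factor $z$ from $B$ is exactly what kills the $z^{-2}$ term.
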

\begin{proof}
In our case, $1\vee V(x)=V(x)$. First, we have for all $x\in\mathbb{R}_+$ and all $s,t\in\mathbb{R}_+$ with $s\leq t$,
\begin{align*}
1+\frac{x}{a}\varphi_1(e^{a(t-s)}-1)\leq m(x,s,t)\leq 1+\frac{x}{a}\varphi_2(e^{a(t-s)}-1).	
\end{align*}
Then, for all $x,y\in\mathcal{X}$, we obtain
\begin{align*}
\varphi_t(x,y)=\sup_{r\geq t}\frac{m(x,0,t)m(y,t,r)}{m(x,0,r)}\leq \sup_{r\geq t}\frac{\left(1+\frac{x}{a}\varphi_2e^{at}\right)\left(1+\frac{y}{a} \varphi_2e^{a(r-t)}\right)}{1+\frac{x}{a}\varphi_1\left(e^{ar}-1\right)}\leq\frac{\left(1+\frac{x}{a}\varphi_2\right)\left(1+\frac{y}{a}\varphi_2\right)}{\frac{x}{a}\varphi_1\wedge 1}.
\end{align*}
Next, for all $\theta\in (0,1)$, we have
\begin{align*}
\varphi_t\left(x,\theta y\right)\varphi_t\left(x,(1-\theta)y\right)\leq  \left(\varphi_t(x,y)\right)^2\leq A_1(x)A_2(y),
\end{align*}
where
\begin{align*}
A_1(x)=\left(\frac{x}{a}\varphi_1\wedge 1\right)^{-2}\left(1+\frac{x}{a}\varphi_2\right)^2,\ A_2(y)=\left(1+\frac{y}{a}\varphi_2\right)^2.
\end{align*}
Moreover, for $\theta\in [\varepsilon,1-\varepsilon]$ and for all $x\in\mathcal{X}$, $V(\theta x)V((1-\theta)x)\leq (\varepsilon x)^{-2}+x^2+2\varepsilon^{-1}$. Then,
\begin{align*}
J\left(V(\cdot)\varphi_t(x,\cdot)\right)\left(y\right)&\leq 2 \int_{\varepsilon}^{1-\varepsilon} V\left(\theta y\right) V\left((1-\theta) y\right)\varphi_t\left(x,\theta y\right)\varphi_t\left(x,(1-\theta)y\right)\frac{d\theta}{1-2\varepsilon}\\
&\leq \left((\varepsilon y)^{-2}+y^2+2\varepsilon^{-1}\right)A_1(x)A_2(y)\leq A_1(x)\sum_{k=0}^6C_k(\varepsilon)y^{k-2},
\end{align*}
where for all $k=0\ldots 6$ $C_k(\varepsilon)$ are constants depending on $x,a,\varepsilon, \varphi_2$. Then, we get
\begin{align*}
\mathbb{E}_x\left[B\left(t,Y_t^{(t)}\right)J\left(V(\cdot)\varphi_t(x,\cdot)\right)\left(Y_t^{(t)}\right)\right] \leq  \varphi_2A_1(x)\sum_{k=0}^6 C_k(\varepsilon)\sup_{t\geq 0}\mathbb{E}_x\left[\left(Y_t^{(t)}\right)^{k-1}\right]<\infty,
\end{align*}
according to Lemma \ref{lemma:moment}.
\end{proof}
Last, we verify that Assumption \ref{assu:Ntmt} is satisfied.
\begin{lemma}\label{lemma:varWt}
For all $t\geq 0$, $x\in\mathcal{X}$, we have
\begin{align*}
\mathbb{E}_{\delta_{x_0}}\left[\left(\frac{N_t}{m(x_0,0,t)}\right)^2\right]\leq \frac{a^2+\varphi_2 x\left(a+2\varphi_2 x\right)+\varphi_2^2x^2}{\left(\min(a,\varphi_1 x)\right)^2}.
\end{align*}
\end{lemma}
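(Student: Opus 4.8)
The plan is to exploit a structural feature special to this model: total mass is conserved at each division and grows deterministically between divisions, so the total jump rate of $N$ is itself deterministic. Write $S_t=\sum_{u\in V_t}X_t^u$ for the total size of the population when $Z_0=\delta_x$. Since each trait solves $\dot X=aX$ between divisions, and a division replaces a cell of size $y$ by two cells of sizes $\theta y$ and $(1-\theta)y$ with $\theta y+(1-\theta)y=y$, the process $S_t$ has no jumps and satisfies $\dot S_t=aS_t$; hence $S_t=x\,e^{at}$ almost surely. Consequently the instantaneous total division rate in the population, $\sum_{u\in V_t}B(t,X_t^u)=\varphi(t)S_t=\varphi(t)\,x\,e^{at}$, is a deterministic function of $t$ alone.

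From this I would conclude that the counting process $N_t-1$ (the number of divisions up to time $t$) has deterministic compensator $\Lambda_t:=\int_0^t\varphi(s)\,x\,e^{as}\,ds=x\,\phi(0,t)=m(x,0,t)-1$, and therefore $N_t-1$ is an inhomogeneous Poisson random variable with mean $\Lambda_t$. This recovers the first-moment identity $\mathbb{E}_{\delta_x}[N_t]=1+\Lambda_t=m(x,0,t)$ as a consistency check, and yields the \emph{exact} second moment
\[
\mathbb{E}_{\delta_x}[N_t^2]=\mathrm{Var}_{\delta_x}(N_t)+\mathbb{E}_{\delta_x}[N_t]^2=\Lambda_t+m(x,0,t)^2.
\]
Writing $m_t:=m(x,0,t)=1+\Lambda_t$, the target quantity is therefore exactly
\[
\mathbb{E}_{\delta_x}\left[\left(\frac{N_t}{m_t}\right)^{2}\right]=1+\frac{\Lambda_t}{m_t^{2}}=1+\frac{m_t-1}{m_t^{2}}.
\]

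Finally I would optimize the scalar map $m\mapsto 1+(m-1)/m^{2}$ over $m\ge 1$: its derivative $(2-m)/m^{3}$ shows the maximum is attained at $m=2$, giving the uniform bound $\sup_{t\ge 0}\mathbb{E}_{\delta_x}[(N_t/m_t)^2]\le 5/4$. Since the right-hand side of the stated inequality is always at least $3$ (treat the cases $\varphi_1 x\le a$ and $\varphi_1 x> a$ separately, using $\varphi_1\le\varphi_2$), this sharper bound immediately implies the claim, and in particular Assumption \ref{assu:Ntmt} holds with a constant independent of $x$ and $t$.

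The only delicate point is justifying the Poisson character of $N_t-1$: this is the standard fact that a simple counting process whose stochastic intensity is a deterministic function is an inhomogeneous Poisson process (Watanabe's characterization), valid here because $\Lambda_t<\infty$ and the population does not explode under Assumption \ref{assu:debut}. If instead one wishes to reproduce the precise right-hand side of the lemma, I would apply the Many-to-One formula for forks \cite[Proposition 3.6]{marguet2016} with $F\equiv 1$, which gives $\mathbb{E}_{\delta_x}[N_t^2]=m(x,0,t)+\int_0^t m(x,0,s)\,\mathbb{E}_x[B(s,Y_s^{(s)})J_{s,t}(Y_s^{(s)})]\,ds$ with $J_{s,t}(z)=2\,\mathbb{E}_\theta[m(\theta z,s,t)m((1-\theta)z,s,t)]\le 2\,m(z,s,t)^2$, and then bound the integral using $m(z,s,t)=1+z\phi(s,t)$ together with the uniform moment bounds of Lemma \ref{lemma:moment}; the main obstacle on that route is controlling the resulting third-order moment term of the auxiliary process, which is exactly what Lemma \ref{lemma:moment} supplies.
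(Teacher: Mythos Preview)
Your argument is correct, and in fact sharper than the paper's. The paper proceeds by applying It\^o's formula to $N_t^2$ to obtain
\[
\mathbb{E}_{\delta_x}[N_t^2]=1+x\int_0^t \varphi(s)e^{as}\bigl(2\,\mathbb{E}_{\delta_x}[N_s]+1\bigr)\,ds,
\]
then bounds the integrand using $\varphi\le\varphi_2$ and $\mathbb{E}_{\delta_x}[N_s]=m(x,0,s)$, and separately bounds $m(x,0,t)^2$ from below via $\varphi\ge\varphi_1$; this produces the stated $x$-dependent right-hand side. Note that the paper's It\^o formula already implicitly uses the very observation you make explicit---namely that the total mass $S_t=\sum_{u}X_t^u$ is conserved at each split and hence equals $xe^{at}$ deterministically, which is why the intensity $\varphi(s)xe^{as}$ appears as a deterministic factor. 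You push this observation one step further: a simple counting process with deterministic compensator is Poisson (Watanabe), so $N_t-1\sim\mathrm{Poisson}(\Lambda_t)$ with $\Lambda_t=m(x,0,t)-1$, giving the \emph{exact} identity $\mathbb{E}_{\delta_x}[N_t^2]=m(x,0,t)^2+\Lambda_t$ (which one can also recover by plugging $\mathbb{E}_{\delta_x}[N_s]=1+\Lambda_s$ into the paper's integral and integrating). Your optimization of $m\mapsto 1+(m-1)/m^2$ then yields the uniform bound $5/4$, which is strictly smaller than the paper's right-hand side and, unlike it, does not degenerate as $x\to 0$ or $x\to\infty$. The only step that requires care is the invocation of Watanabe's theorem, and you flag it correctly: simplicity of the point process and finiteness of $\Lambda_t$ both hold here. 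Your final paragraph sketching the fork Many-to-One route is a viable but heavier alternative; it is not needed given the Poisson identification.
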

\begin{proof}
According to It\^o's formula, we have, for all $x\in\mathcal{X}$ and $t\geq 0$,
\begin{align*}
\mathbb{E}_{\delta_x}\left[N_t^2\right]=1+x\int_0^t \varphi(s)e^{as}\left(2\mathbb{E}_{\delta_x}\left[N_s\right]+1\right)ds. 
\end{align*}After some calculations, we obtain
\begin{align*}
\mathbb{E}_{\delta_x}\left[N_t^2\right]\leq \frac{e^{2at}}{a^2}\left(a^2+\varphi_2x\left(a+2\varphi_2x\right)+\varphi_2^2 x^2\right)
\end{align*}	
Moreover, we have
\begin{align*}
m(x,0,t)^2\geq e^{2at}\left(e^{-at}+\frac{x}{a}\varphi_1(1-e^{-at})\right)^2\geq e^{2at}\left(\min\left(1, \frac{x}{a}\varphi_1\right)\right)^2,
\end{align*}
and the result follows.
\end{proof}
\paragraph*{Acknowledgment}I would like to thank Vincent Bansaye for his continual guidance during this work. I acknowledge partial support by the Chaire Mod\'elisation Math\'ematique et Biodiversit\'e of Veolia Environnement - \'Ecole Polytechnique - Museum National Histoire Naturelle - F.X. This work is supported by the "IDI 2014" project funded by the IDEX Paris-Saclay, ANR-11-IDEX-0003-02 and by the French national research agency (ANR) via project MEMIP (ANR-16-CE33-0018).
\bibliographystyle{alpha}
\bibliography{bibli}

\begin{thebibliography}{BDMT11}

\bibitem[AH76]{Asmussen1976}
S.~Asmussen and H.~Hering.
\newblock Strong limit theorems for general supercritical branching processes
  with applications to branching diffusions.
\newblock {\em Probability Theory and Related Fields}, 36(3):195--212, 1976.

\bibitem[AK98a]{Athreya1998a}
K.B. Athreya and H.-J. Kang.
\newblock Some limit theorems for positive recurrent branching {M}arkov chains:
  {I}.
\newblock {\em Advances in Applied Probability}, 30:693--710, 1998.

\bibitem[AK98b]{Athreya1998b}
K.B. Athreya and H.-J. Kang.
\newblock Some limit theorems for positive recurrent branching {M}arkov chains:
  {II}.
\newblock {\em Advances in Applied Probability}, 30:711--722, 1998.

\bibitem[Ban14]{bansaye2013}
V.~Bansaye.
\newblock Ancestral lineages and limit theorems for branching {M}arkov chains.
\newblock {\em HAL-Id: hal-00851284v4}, 2014.

\bibitem[BDMT11]{bansaye2011limit}
V.~Bansaye, J.-F. Delmas, L.~Marsalle, and V.~C. Tran.
\newblock Limit theorems for {M}arkov processes indexed by continuous time
  {G}alton-{W}atson trees.
\newblock {\em The Annals of Applied Probability}, pages 2263--2314, 2011.

\bibitem[BH15]{BH2015}
V.~Bansaye and C.~Huang.
\newblock Weak law of large numbers for some {M}arkov chains along non
  homogeneous genealogies.
\newblock {\em ESAIM: PS}, 19:307--326, 2015.

\bibitem[BT11]{BT}
V.~Bansaye and V.~C. Tran.
\newblock Branching {F}eller diffusion for cell division with parasite
  infection.
\newblock {\em Al{\'e}a}, 8(81-127):241--242, 2011.

\bibitem[Clo17]{cloez}
B.~Cloez.
\newblock Limit theorems for some branching measure-valued processes.
\newblock {\em Advances in Applied Probability}, 49(2):549--580, 2017.

\bibitem[DM04]{del2004feynman}
P.~Del~Moral.
\newblock {\em Feynman-{K}ac Formulae}.
\newblock Springer, 2004.

\bibitem[DM10]{Delmas2010}
J.-F. Delmas and L.~Marsalle.
\newblock Detection of cellular aging in a {G}alton-{W}atson process.
\newblock {\em Stochastic Processes and their Applications}, 120(12):2495 --
  2519, 2010.

\bibitem[EHK10]{EHK2010}
J.~Engl{\"a}nder, S.~Harris, and A.~Kyprianou.
\newblock Strong law of large numbers for branching diffusions.
\newblock {\em Ann. Inst. H. Poincar{\'e} Probab. Statist.}, 46(1):279--298,
  2010.

\bibitem[EK09]{ethier2009markov}
S.~N. Ethier and T.~G Kurtz.
\newblock {\em {M}arkov processes: characterization and convergence}, volume
  282.
\newblock John Wiley \& Sons, 2009.

\bibitem[Eng09]{Englander2009}
J.~Engl{\"a}nder.
\newblock Law of large numbers for superdiffusions: The non-ergodic case.
\newblock {\em Ann. Inst. H. Poincar{\'e} Probab. Statist.}, 45(1):1--6, 02
  2009.

\bibitem[EW06]{Englander2006}
J.~Engl{\"a}nder and A.~Winter.
\newblock Law of large numbers for a class of superdiffusions.
\newblock {\em Ann. Inst. H. Poincar{\'e} Probab. Statist.}, 42(2):171 -- 185,
  2006.

\bibitem[FM04]{fournier2004microscopic}
N.~Fournier and S.~M{\'e}l{\'e}ard.
\newblock A microscopic probabilistic description of a locally regulated
  population and macroscopic approximations.
\newblock {\em The Annals of Applied Probability}, pages 1880--1919, 2004.

\bibitem[GB03]{georgii2003supercritical}
H.-O. Georgii and E.~Baake.
\newblock Supercritical multitype branching processes: the ancestral types of
  typical individuals.
\newblock {\em Advances in Applied Probability}, pages 1090--1110, 2003.

\bibitem[Guy07]{guyon2007limit}
J.~Guyon.
\newblock Limit theorems for bifurcating {M}arkov chains. {A}pplication to the
  detection of cellular aging.
\newblock {\em The Annals of Applied Probability}, 17(5/6):1538--1569, 2007.

\bibitem[HM11]{hairer2011yet}
M.~Hairer and J.~C. Mattingly.
\newblock Yet another look at {H}arris ergodic theorem for {M}arkov chains.
\newblock In {\em Seminar on Stochastic Analysis, Random Fields and
  Applications VI}, pages 109--117. Springer, 2011.

\bibitem[HO16]{HO2016}
M.~Hoffmann and A.~Olivier.
\newblock Nonparametric estimation of the division rate of an age dependent
  branching process.
\newblock {\em Stochastic Processes and their Applications}, 126(5):1433--1471,
  2016.

\bibitem[HR14]{Harris2014}
S.~Harris and M.~Roberts.
\newblock A strong law of large numbers for branching processes: almost sure
  spine events.
\newblock {\em Electronic Communications in Probability}, 19, 2014.

\bibitem[Mar16]{marguet2016}
A.~Marguet.
\newblock Uniform sampling in a structured branching population.
\newblock {\em arXiv:1609.05678}, 2016.

\bibitem[MS16]{mischler2016spectral}
S.~Mischler and J.~Scher.
\newblock Spectral analysis of semigroups and growth-fragmentation equations.
\newblock {\em Ann. Inst. H. Poincar{\'e} (C) Non Linear Analysis},
  33(3):849--898, 2016.

\bibitem[MT12]{Meyn}
S.~P. Meyn and R.~L. Tweedie.
\newblock {\em Markov chains and stochastic stability}.
\newblock Springer Science \& Business Media, 2012.

\bibitem[RSZ14]{Ren2014}
Y.-X. Ren, R.~Song, and R.~Zhang.
\newblock Central limit theorems for supercritical branching {M}arkov
  processes.
\newblock {\em Journal of {F}unctional {A}nalysis}, 266(3):1716--1756, 2014.

\end{thebibliography}
\end{document}